 \newtheorem{thm}{Theorem}[section]
 \newtheorem{cor}[thm]{Corollary}
 \newtheorem{lem}[thm]{Lemma}
 \newtheorem{prop}[thm]{Proposition}
 \theoremstyle{definition}
 \newtheorem{defn}[thm]{Definition}
 \theoremstyle{remark}
 \newtheorem{rem}[thm]{Remark}
 \newtheorem*{ex}{Example}
 \numberwithin{equation}{section}
\begin{document}

\title[Moments-preserving cosines and semigroups]
 {On moments-preserving cosine families and semigroups in $C[0,1]$}
\author[Adam Bobrowski]{Adam Bobrowski}

\address{%
Institute of Mathematics,\\
Polish Academy of Sciences,\\
\'Sniadeckich 8, \\
00-956 Warsaw, Poland \\
\emph{on leave from}
\\
Lublin University of Technology\\
Nadbystrzycka 38A\\
20-618 Lublin, Poland}

\email{a.bobrowski@pollub.pl}

\author[Delio Mugnolo]{Delio Mugnolo}

\address{%
Institut f\"ur Analysis\\
Universit\"at Ulm\\
89069 Ulm\\
Germany}

\email{delio.mugnolo@uni-ulm.de}

\newcommand{\cxi}{(\xi_i)_{i\in \N} }
\newcommand{\lam}{\lambda}
\newcommand{\eps}{\epsilon}
\newcommand{\ud}{\, \mathrm{d}}
\newcommand{\pr}{\mathbb{P}}
\newcommand{\f}{\mathcal{F}}
\newcommand{\s}{\mathcal{S}}
\newcommand{\h}{\mathcal{H}}
\newcommand{\ai}{\mathcal{I}}
\newcommand{\R}{\mathbb{R}}
\newcommand{\C}{\mathbb{C}}
\newcommand{\Z}{\mathbb{Z}}
\newcommand{\N}{\mathbb{N}}
\newcommand{\e}{\mathrm {e}}
\newcommand{\tif}{\tilde {f}}
\newcommand{\slam}{\sqrt {\lam}}
\newcommand{\Id}{{\rm Id}}
\newcommand{\cic}{C_{\rm mp}}
\newcommand{\cod}{C_{\text{\rm odd}}[0,1]}
\newcommand{\cev}{C_{\text{\rm even}}[0,1]}
\newcommand{\cez}{C_0(0,1]}
\newcommand{\fod}{f_{\text{\rm odd}}} 
\newcommand{\fev}{f_{\text{\rm even}}} 
\newcommand {\sem}[1]{\mbox{$\left (\e^{t{#1}}\right )_{t \ge 0}$}}
\newcommand{\tr}{\textcolor{red}}

\thanks{Version of \today}
\subjclass{47D06, 47D09}

\keywords{Method of images, Cosine families, Strongly-continuous semigroup, Differential operators with integral conditions}

\dedicatory{}

\begin{abstract}
We use the newly developed Kelvin's method of images \cite{kosinusy,kelvin} to show existence of a unique cosine family generated by a restriction of the Laplace operator in $C[0,1]$, that preserves the first two moments. We characterize the domain of its generator by specifying its boundary conditions. Also, we show that it enjoys inherent symmetry properties, and in particular that it leaves the subspaces of odd and even functions invariant. Furthermore, we provide information on long-time behavior of the related semigroup.
\end{abstract}

\maketitle

\section{Introduction}

While evolution equations on domains are usually equipped with boundary conditions, these can be partially or completely replaced by integral conditions {almost without changes when it comes to relevant issues like well-posedness and spectral asymptotics}. This has been first observed by J.R. Cannon 50 years ago \cite{Can63}, his seminal investigations having been generalized and applied by several authors ever since. Depending on the considered model, certain integral conditions have in fact a clear physical meaning (like conservation of mass) which can actually be expected by real-world systems.


A semigroup-theoreti\-cal study of diffusion and wave equations associated with one-dimensional Laplace operators equipped with integral conditions has recently been commenced in \cite{mugnic}, where an abstract framework for studying such problems has been proposed. In particular, it was shown there that the requirement that the first two moments (i.e., both moments of order $0$ and $1$) vanish, leads to well-posed wave and diffusion equations in the space of $H^{-1}(T)$-distributions of zero average, where $H^{-1}(T)$ is the dual space of $H^1(T):=\{f\in H^1(0,1):f(0)=f(1)\}$. Moreover, spectral properties and long time behavior of the related solutions has been studied.  

In this paper, we present an alternative approach to such problems. Namely, using Lord Kelvin's method of images, shown recently to be a useful tool for proving generation theorems in \cite{kosinusy,kelvin}, we construct, in a quite explicit way, a cosine family in $C[0,1]$, generated by a restriction of the Laplace operator and preserving the first two moments. As it turns out, the domain of this cosine family's generator {is the space} of twice continuously differentiable functions $f\in C[0,1]$ satisfying the boundary conditions 
\begin{equation} f'(0) = f(1) - f(0) = f'(1).\label{bc} \end{equation}
Moreover, we show that  among the semigroups generated by various realizations of the Laplace operator in $C[0,1]$ there is only one that preserves the first two moments: this is the semigroup with boundary conditions \eqref{bc}.
Of course, by the Weierstra{ss} formula, this implies that the same is true for cosine families: among cosine families generated by one-dimensional Laplace operator in $C[0,1]$, there is but one that preserves the first two moments. 

These results, contained in the main Section \ref{sec:cfc01}, complement those of~\cite{kosinusy,ChiKeyWar07,warma}, where quite explicit formulae for the cosine family generated by the second derivative had been found in the case of local Robin and Wentzell-Robin boundary conditions. 
They may also be compared with an explicit solution found for a problem investigated in~\cite{LesEllIng98}, where other, related boundary conditions were discussed.  


One of the advantages of the Lord Kelvin's method of images is that often it provides an explicit form for the searched-for semigroup or cosine family. In our case this explicit form, referred to as the \emph{abstract Kelvin formula}, involves an extension $\tilde f$ of a member $f$ of $C[0,1]$ to the whole of $\R,$ see \eqref{kelvin}. Unfortunately, $\tilde f$  must be calculated iteratively, and no closed-form is available. Nevertheless, analysis of such extensions gives some insight into the nature of the moments-preserving cosine family, and exhibits its inherent symmetry properties. In Section \ref{s2}, we show that extensions of even (odd) functions about $\frac 12$ are even (odd). This implies that the cosine family leaves the subspaces of odd and even functions invariant. Interestingly, the moments-preserving cosine family, as restricted to the space of even functions is the same as the cosine family generated by the Laplace operator with Neumann boundary conditions. Moreover, `the odd part' of the cosine family is isometrically isomorphic to the cosine family with Robin boundary condition investigated previously in \cite{kelvin}.

Our final section is devoted to asymptotic behavior of the moments-preserving semigroup. It is clear from the Weierstrass formula that since the moments-preserving cosine family may be decomposed into its even and odd parts, the same is true for the semigroup. Additionally, the limit behavior of the even part, associated with the Neumann Laplace operator, is well known: in the limit, trajectories homogenize and become constant. The odd part is slightly more complicated: the related physical (or biological: see \cite{kelvin,kazlip}) process  involves particles diffusing freely in the open unit interval with constant inflow of particles through the boundary $x=1$ and outflow of particles through the boundary $x=0.$ This suggest that in the limit the distribution of particles should stabilize. This hypothesis can be proved by Hilbert space methods: The main result of Section \ref{s3} (see formula \eqref{glim}) states that as $t\to \infty$ the trajectory of the moments-preserving semigroup converges to a linear combination of $f_0=1_{[0,1]}$ (related to the even part) and of $f_1\in C[0,1]$, given by $f_1(x)=12x-6$, (related to the odd part) with coefficients in the combination being moments (about 0) of the trajectory's starting point.

\section{Moments-preserving cosine families in $C[0,1]$: a generation theorem}\label{sec:cfc01}

Let $C[0,1]$ be the Banach space of continuous functions on the unit interval, and $C(\R)$ be the Fr\'echet space of continuous functions on $\R$ with topology of almost uniform convergence. In what follows we think of real-valued functions, but this is merely to fix attention; the same analysis can be performed in the space of complex functions, as well. Let $(C(t))_{t \in \R}$ be the basic cosine family in $C(\R)$ given by the D'Alembert formula,
\begin{equation}
\label{dalemb}
 C(t) f (x) := \frac 12 (f(x+t) + f(x-t)) , \qquad t,x \in \R.
 \end{equation}
Also, let $F_i$ denote the moment of order $i$ about 0, i.e., let it be the linear functional on $C[0,1]$ defined by
\begin{equation} \label{fi} F_i f := \int_0^1 x^i f(x) 
\ud x,\qquad {i\in \mathbb N}.\end{equation} 
With an abuse of notation, we will denote by $F_i$ also the linear functional on $C(\R)$ defined by
\begin{equation} \label{fiR} F_i f := \int_0^1 x^i f(x) 
\ud x,\qquad  {i\in \mathbb N}.\end{equation}
Clearly, $F_i$ is continuous both on $C[0,1]$ and $C(\R)$ {for all $i\in \mathbb N$.}

In the theory of semigroups of linear operators and the related theory of cosine families, Lord Kelvin's method of images can be thought of as a way of constructing families of operators generated by an operator with a boundary condition by means of families generated by the same operator in a larger space, where no boundary conditions are imposed (cf.~\cite{kosinusy,kelvin}). In our particular context, the method boils down to constructing a cosine family
$\cic= (\cic (t))_{t \in \R}$ in $C[0,1]$ via the formula 
\begin{equation} \label{kelvin} \cic (t) f(x) = C(t) \tilde f(x), \qquad x \in [0,1],\; t\in \mathbb R,\; f \in C[0,1],\end{equation}  
where `mp' stands for `moments-preserving' and, more importantly, $\tif \in C(\R) $ is a certain extension of $f$, chosen in such a way that $(\cic(t))_{t\in \mathbb R}$ preserves both $F_0$ and $F_1$. To be more specific: Given $f\in C[0,1]$, 
we are looking for an $\tif:\mathbb R\to \mathbb R$ such that 
\begin{itemize} 
\item [(a) ] $\tif \in C(\R)$ and $ \tif (x) = f(x)$ for all $x \in [0,1]$,
\item [(b) ] $F_0 C(t) \tif = F_0 f$ for all $t \in \R$, and
\item [(c) ] $F_1 C(t) \tif  = F_1 f$ for all $t \in \R$.
\end{itemize}
Existence of such an extension is secured by Proposition \ref{lem_main}, later on. We note that if (b) holds, then (c) may be expressed equivalently as follows: $G C(t) \tif  = G f$ for all $t \in \R$,
where $Gf=G_af = \int_0^1 (a - x) f(x) \ud x$, and $a\in \R$ is fixed. In other words, preservation of the first two moments about $0$ is equivalent to preservation of the first two moments about any real number.  

For the proof of Proposition \ref{lem_main}, we need the following lemma. 

\begin{lem} \label{nlem}For $g\in C[0,1],$ there exists a unique $f\in C[0,1]$ such that \begin{equation} \label{lemcik} f(x) - 2\int_0^x f(y) \ud y = g(x), \qquad x \in [0,1]. \end{equation} Moreover, \begin{equation}
\label{lemcikw} f(x) = g(x) + 2\int_0^x \e^{2(x-y)} g(y) \ud y. \end{equation} \end{lem}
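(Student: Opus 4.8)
The plan is to prove this as a standard Volterra integral equation of the second kind. Equation \eqref{lemcik} can be rewritten as
\begin{equation*}
f(x) = g(x) + 2\int_0^x f(y)\ud y,
\end{equation*}
which is precisely a Volterra equation with kernel identically equal to $2$. Such equations are known to have a unique continuous solution for every continuous $g$, and the claimed formula \eqref{lemcikw} is exactly the resolvent representation, so the entire statement should follow once existence, uniqueness, and the explicit formula are all verified together.

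First I would \emph{guess} the formula and verify it directly, which is the cleanest route and avoids appealing to abstract Volterra theory. That is, I would define $f$ by the right-hand side of \eqref{lemcikw} and simply check that it solves \eqref{lemcik}. To do this, set $F(x):=\int_0^x f(y)\ud y$ and compute $2F(x)$ by substituting the proposed formula for $f$; interchanging the order of integration in the resulting double integral $\int_0^x\int_0^y \e^{2(y-z)}g(z)\ud z\ud y$ should, after evaluating the inner $y$-integral, collapse things so that $g(x) + 2F(x)$ reproduces $f(x)$. Continuity of this $f$ is immediate since $g$ and the exponential are continuous and the integral depends continuously on its upper limit.

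For uniqueness, I would argue by linearity: if $f_1, f_2$ both solve \eqref{lemcik} for the same $g$, then $h:=f_1-f_2$ satisfies $h(x) = 2\int_0^x h(y)\ud y$ for all $x\in[0,1]$. Writing $M:=\sup_{x\in[0,1]}|h(x)|$ (finite by continuity) and iterating the inequality $|h(x)|\le 2\int_0^x |h(y)|\ud y$ yields the standard bound $|h(x)|\le M\,\frac{(2x)^n}{n!}$ for every $n$, whence $h\equiv 0$ as $n\to\infty$. Alternatively, $h$ is differentiable with $h'=2h$ and $h(0)=0$, forcing $h\equiv 0$ by the Picard--Lindel\"of uniqueness theorem.

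I do not expect a serious obstacle here: the main care-point is the Fubini interchange in the verification step and the bookkeeping of the exponential factors, so that the double integral telescopes correctly against $g(x)$. The uniqueness Gronwall-type estimate is routine. The only conceptual remark worth isolating is that the reformulation as an equivalent differential initial value problem (differentiate \eqref{lemcik} to get $f' - 2f = g' + 2g$ with $f(0)=g(0)$, assuming $g$ smooth) both motivates the integrating factor $\e^{-2x}$ underlying \eqref{lemcikw} and, via approximation of $g$ by smooth functions, gives a second independent derivation of the formula should the direct verification prove cumbersome.
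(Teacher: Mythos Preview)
Your main argument is correct and complete. Defining $f$ by \eqref{lemcikw}, verifying via Fubini that it satisfies \eqref{lemcik}, and then deducing uniqueness either by the Gronwall iteration $|h(x)|\le M(2x)^n/n!$ or by observing that the difference $h$ satisfies $h'=2h$ with $h(0)=0$ --- all of this works exactly as you describe.

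The paper takes a different route for the existence/uniqueness half: it equips $C[0,1]$ with the Bielecki weighted norm $\|f\|_\lambda=\sup_{x\in[0,1]}\e^{-\lambda x}|f(x)|$ for some $\lambda>2$, checks that the map $Tf(x)=g(x)+2\int_0^x f(y)\ud y$ is a strict contraction in this norm (Lipschitz constant $2/\lambda<1$), and then invokes the Banach fixed point theorem to get existence and uniqueness in one stroke; the explicit formula \eqref{lemcikw} is afterwards verified by direct substitution, just as you do. Your approach is arguably more elementary --- no auxiliary norm, no contraction principle --- while the paper's Bielecki device is the classical tool for linear Volterra equations and has the mild advantage of not needing to guess the resolvent kernel $2\e^{2(x-y)}$ in advance.

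One small slip in your closing heuristic remark: differentiating \eqref{lemcik} yields $f'-2f=g'$, not $f'-2f=g'+2g$. This has no bearing on the proof itself, since you offered it only as motivation for the integrating factor.
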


\begin{proof} For $\lam \in \R,$ the Bielecki-type norm \cite{abielecki,edwards} 
$$ \|f\|_{\lam} = \sup_{x\in [0,1]} |\e^{-\lam x} f(x)|,$$ is equivalent to the original supremum norm. In particular, $C[0,1]$ with $\|\cdot \|_\lam $ is a Banach space. We take $\lam >2$ and consider $T$ mapping $C[0,1]$ into itself, given by 
$$ (Tf)(x) = g(x) + 2\int_0^x f(y) \ud y.$$
Then, for any $f_1,f_2\in C[0,1]$,
\begin{align*}
\|Tf_1 - Tf_2 \|_{\lam } & = \sup_{x\in [0,1]} \left | 2 \int_0^x \e^{-\lam (x-y)} \e^{-\lam y} [f_1(y) - f_2(y)] \ud y\right | \\
& \le \sup_{x\in [0,1]}  2 \int_0^x \e^{-\lam (x-y)} \|f_1 - f_2\|_\lam  \ud y \\
& < \frac 2\lam \|f_1 - f_2\|_\lam . 
\end{align*}
Hence, by the Banach fixed point theorem, there exists a unique $f$ such that $f=Tf$, i.e., \eqref{lemcik} is satisfied. Moreover, a simple calculation shows that $f$ given by \eqref{lemcikw} satisfies \eqref{lemcik}. 
\end{proof}



\begin{prop}\label{lem_main}{For $f \in C[0,1]$, an extension $\tif$ that fulfills  conditions $(a)$-$(c)$, listed above, exists and is uniquely determined.}
\end{prop}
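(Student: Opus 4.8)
The plan is to convert the moment conditions (b) and (c) into pointwise functional equations for $\tif$, and then to solve these recursively, one unit interval at a time, by appealing to Lemma \ref{nlem}.

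First I would record that, since $C(t)=C(-t)$ by \eqref{dalemb}, the maps $t\mapsto F_0C(t)\tif$ and $t\mapsto F_1C(t)\tif$ are even, so it suffices to treat $t\ge 0$. Writing $H(t)=\int_0^t\tif$, each of these maps is $C^1$ in $t$, and evaluating at $t=0$ already gives $F_0\tif=F_0f$ and $F_1\tif=F_1f$ from (a). Hence (b) and (c) hold for all $t$ if and only if the corresponding $t$-derivatives vanish identically; carrying out the differentiation, (b) becomes equivalent to
\begin{equation}\label{pstar}\tif(1+t)+\tif(-t)=\tif(t)+\tif(1-t),\qquad t\ge 0,\end{equation}
and (c) becomes equivalent to
\begin{equation}\label{pdag}\tif(1+t)-\tif(1-t)-\int_t^{1+t}\tif+\int_{-t}^{1-t}\tif=0,\qquad t\ge 0.\end{equation}

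Next I would use \eqref{pstar}--\eqref{pdag} to build $\tif$ inductively on the intervals $[n,n+1]$ and $[-(n+1),-n]$. For the base step $t\in[0,1]$ the only unknown values are $r(t):=\tif(1+t)$ on $[1,2]$ and $\ell(t):=\tif(-t)$ on $[-1,0]$, while $\tif$ restricted to $[0,1]$ equals $f$. Equation \eqref{pstar} reads $r(t)+\ell(t)=f(t)+f(1-t)$, which expresses $\ell$ through $r$. Substituting this into \eqref{pdag}, splitting the two integrals at $1$ and at $0$ respectively, and collecting the known integrals of $f$, the equation collapses---after cancellations---precisely to
\[ r(t)-2\int_0^t r(y)\,\ud y=g(t),\qquad t\in[0,1], \]
with $g\in C[0,1]$ determined by $f$. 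This is exactly \eqref{lemcik}, so Lemma \ref{nlem} produces a unique continuous $r$, and then $\ell$ is read off from \eqref{pstar}. A direct check at $t=0$ gives $r(0)=f(1)$ and $\ell(0)=f(0)$, so the pieces match continuously at $1$ and at $0$; thus $\tif$ is uniquely determined on $[-1,2]$.

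The inductive step is identical in form: for $t\in[n,n+1]$ the arguments $1+t$ and $-t$ sweep out the next, still-undetermined intervals $[n+1,n+2]$ and $[-(n+1),-n]$, whereas $t$ and $1-t$ lie in intervals on which $\tif$ has already been fixed. Setting $\rho(\tau)=\tif(n+1+\tau)$ and eliminating the left-hand piece via \eqref{pstar} exactly as above, \eqref{pdag} again reduces to an equation of the form \eqref{lemcik}, namely $\rho(\tau)-2\int_0^\tau\rho=g_n(\tau)$, whose right-hand side is assembled from the already-constructed values of $\tif$; Lemma \ref{nlem} gives a unique solution, and evaluation at $\tau=0$ guarantees continuity at the new junctions. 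Since every $t\ge0$ lies in exactly one interval $[n,n+1]$, this determines $\tif$ on all of $\R$, and by construction \eqref{pstar}--\eqref{pdag}, hence (b)--(c), hold for every $t$; existence follows. Uniqueness follows because any admissible extension must satisfy the same integral equations, each having a unique solution. The main obstacle I anticipate is the base-step reduction: one must carry out the integral splittings and cancellations carefully enough to confirm that the coefficient multiplying $\int_0^t r$ is exactly $2$, so that \eqref{pdag} matches \eqref{lemcik} verbatim rather than some perturbed Volterra equation outside the scope of Lemma \ref{nlem}.
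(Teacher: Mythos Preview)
Your proposal is correct and follows essentially the same route as the paper: both differentiate conditions (b) and (c) to obtain the pointwise relations you call \eqref{pstar}--\eqref{pdag} (the paper's \eqref{symetria} and \eqref{nowyw}), eliminate one of the two unknown pieces via \eqref{pstar}, and reduce to the Volterra equation of Lemma~\ref{nlem}; the anticipated coefficient does come out as $2$. The only substantive differences are cosmetic: the paper organizes the induction through the pairs $(g_n,h_n)$ and records the explicit recurrence \eqref{rekurencja} (needed later in Sections~\ref{s2}--\ref{s3}), and its compatibility check (Step~4) is more laborious than your observation that the equations at $t=n$ are shared by consecutive steps, which indeed forces $\rho(0)$ and $\lambda(0)$ to match the previously constructed values.
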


\begin{proof}\bf Step 1. \rm  It suffices to find for all $n\in \mathbb N$ functions $g_n,h_n:[0,1]\to \mathbb R$ related to $\tif $ as follows:
\begin{equation}\label{gnhn} g_n(x) = \tif (x+ n), \quad h_n(x) = \tif (1-x - n), \qquad x \in [0,1].\end{equation} 
Since {in accordance with (a)} we want $\tif $ to be continuous (and in fact well-defined at $x \in \Z$), these functions must satisfy compatibility conditions: 
\begin{equation} \label{cc} h_{n+1} (0)= h_n(1), \quad g_{n+1} (0)= g_n(1), \qquad n \in \mathbb N.\end{equation}
Note that 
\begin{equation}\label{def:h0g0}
g_0(x) = f(x)\quad \hbox{and}\quad h_0 (x) = f(1-x),\qquad x \in [0,1].
\end{equation}

\bf Step 2. \rm 
Condition (b) is satisfied if and only if 
\begin{equation*} M(t): = \int_0^1 \tif (x+ t) \ud x +  \int_0^1 \tif (x- t) \ud x = 
 \int_t^{1+t} \tif (x) \ud x +  \int_{-t}^{1-t} \tif (x) \ud x \end{equation*}
does not depend on $t\in [0,\infty).$ This holds if 
$$ M'(t) = \tif (1+t ) - \tif (t) - \tif (1-t) + \tif (-t) =0, \qquad t \ge 0.$$
Writing $t= n + x $ where $n \in \N, x \in [0,1],$ we check that this  is equivalent to 
\begin{equation} \label{symetria} g_{n+1} + h_{n+1} = g_n + h_n, \qquad n \in \mathbb N.\end{equation}  
Similarly, (c) holds if and only if for all $t\ge 0,$
$$ \int_t^{1+t} (x - t) \tif (x) \ud x + \int_{-t}^{1-t} (x + t) \tif (x) \ud x =2\int_0^{1} x f (x) \ud x.$$
This is satisfied if and only if 
$$ \tif (1+t) - \int_t^{1+t} \tif (x) \ud x - \tif (1-t) + \int_{-t}^{1-t}\tif (x) \ud x = 0, \qquad t \ge 0, $$
i.e., if and only if
$$ g_{n+1}(x) - \int_{n+x}^{1+n + x} \tif (y) \ud y - h_{n} (x) + \int_{-n-x}^{1-n - x} \tif (y) \ud y=0, \ \ \  x \in [0,1], n \in \mathbb N.$$
In other words, (c) holds if and only if 
\begin{align} 
\label{nowyw} g_{n+1} (x) = h_n(x) &+\int_x^1 g_n(y) \ud y + \int_0^x g_{n+1}(y) \ud y \\ & - \int_0^x h_{n+1}(y) \ud y - \int_x^1 h_n(y) \ud y, \qquad  x \in [0,1], n \in \mathbb N.\nonumber
\end{align}

\bf Step 3. \rm Plugging $h_{n+1}= g_n + h_n - g_{n+1}$ into \eqref{nowyw}, we see that (b) and (c) are equivalent to \eqref{symetria} coupled with
\begin{align} 
\label{nowywr} g_{n+1} (x) - 2\int_0^x g_{n+1} (y) \ud y &= h_n(x) -\int_0^1 h_n(y) \ud y - \int_0^x g_{n}(y) \ud y  \\ &\phantom{=}+ \int_x^1 g_n(y) \ud y, \qquad  x \in [0,1], n \in \mathbb N.\nonumber
\end{align}
By Lemma \ref{nlem}, $g_{n+1}$ is uniquely determined by the pair $(g_n,h_n)$, and by \eqref{symetria} so is $h_{n+1}$. Moreover, by \eqref{lemcikw},
\begin{align*} 
g_{n+1} (x) & = r_n (x) + 2\int_0^x \e^{2(x-y)} r_n (y) \ud y \\
&= h_n (x) - \e^{2x} \int_0^1d_n(y) \ud y + 2 \int_0^x \e^{2(x-y)}d_n (y) \ud y,\end{align*}
where $r_n$ is the right-hand side of \eqref{nowywr}, the second equality follows by integration by parts, and
$$ d_n := h_n - g_n.$$  
Combining this with \eqref{symetria}, we obtain the recurrence
\begin{equation}
h_{n+1} = - \psi_{n} + g_n, \qquad g_{n+1} =  \psi_n + h_n, \label{rekurencja}
\end{equation}
where 
\begin{equation}\label{psin}
\psi_n (x):= -\e^{2x}\int_0^1 d_n(y) \ud y + 2 \int_0^x \e^{2(x-y)} d_n(y) \ud y,\qquad x\in [0,1],
\end{equation}
allowing to calculate all $g_n,h_n$'s recursively. 

\bf Step 4. \rm We need to check the compatibility conditions. To this end, we claim that 
\begin{equation} \label{claimik} h_n(1) - g_n(0) = \int_0^1 d_n (y) \ud y =  h_n (0) - g_n (1) , \qquad n \ge 0.\end{equation}
For $n=0$, all the three quantities involved here are zero, since $g_0(0) = f(0) = h_0(1), $ $g_0(1)=f(1)=h_0(0)$, and $\int_0^1 g_0(y)\ud y = \int_0^1 h_0(y)\ud y= \int_0^1 f(y)\ud y.$
Moreover, by \eqref{rekurencja}, introducing $I_n = 2\int_0^1 \e^{2(1-y)}d_n (y) \ud y$, we have
\begin{align*}
\int_0^1 d_{n+1} (y) \ud y &=\e^2 \int_0^1 d_{n} (y) \ud y - I_n, \\
h_{n+1} (1) - g_{n+1} (0)  &= (\e^2 +1) \int_0^1 d_{n} (y) \ud y - I_n - h_n(0) + g_n(1),\\
h_{n+1} (0) - g_{n+1} (1)  & = (\e^2 +1) \int_0^1 d_{n} (y) \ud y - I_n - h_n(1) + g_n(0).\end{align*}
Hence, if \eqref{claimik} holds for some $n$, then it holds for $n+1$, as well, completing the proof of the claim. 

Using \eqref{rekurencja} and \eqref{claimik}, we obtain
$ h_{n+1} (0) = \int_0^1 d_{n} (y) \ud y + g_n (0)= h_n(1),$   and $ g_{n+1} (0) = - \int_0^1 d_{n} (y) \ud y + h_n(0) = g_n (1),$
i.e., both compatibility conditions are satisfied. In other words, $\tif $ can now be \emph{defined} by \eqref{gnhn} and \eqref{def:h0g0}--\eqref{rekurencja},  and is indeed a continuous function. {Because its restriction to $[0,1]$ is clearly $f$, condition (a) is satisfied.} Also, as we have seen, $\tif $ is uniquely determined {by conditions (a), (b), and (c)}.\end{proof}


\begin{defn}\label{def:intext}
Let $f\in C[0,1]$. The function $\tif:\mathbb R\to \mathbb R$ defined in accordance with the rules  \eqref{gnhn}, \eqref{def:h0g0} and \eqref{rekurencja} is called the \emph{integral extension} of $f$. 
\end{defn}

The extension operator 
\[
E: C[0,1] \ni f \mapsto Ef := \tif \in C(\R)
\]
is continuous. Our analysis shows in particular that if \eqref{kelvin} is to define a moments-preserving cosine family, then $Ef$ is necessarily given by \eqref{gnhn}, \eqref{def:h0g0}, and \eqref{rekurencja}. In Theorems \ref{tmain}--\ref{lateron}, later on, we show that \eqref{kelvin} indeed defines a cosine family with the prescribed properties; and that among all cosine families in $C[0,1]$ generated by a realization of the Laplace operator, that defined by \eqref{kelvin} is the only one that preserves the first two moments. 

Let $D$ denote the set of twice continuously differentiable functions $f:[0,1]\to \mathbb R$ satisfying \eqref{bc}
or equivalently 
\begin{equation}F_0(f'')=F_1(f'')=0.\label{bce} \end{equation}

\begin{lem} \label{de} Let $f\in D$. Then its integral extension $Ef$  is twice continuously differentiable on $(-1,2).$ \end{lem}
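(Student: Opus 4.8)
The plan is to work directly with the explicit description of $\tif=Ef$ provided by \eqref{gnhn}, \eqref{def:h0g0} and \eqref{rekurencja}, and to check the only two things that can go wrong on the open interval $(-1,2)$: smoothness of the individual pieces, and $C^2$-matching at the two interior breakpoints $z=0$ and $z=1$. Concretely, on $(-1,2)$ the extension is built from three pieces,
\[
\tif(z)=h_1(-z)\ \ (z\in[-1,0]),\qquad \tif(z)=f(z)\ \ (z\in[0,1]),\qquad \tif(z)=g_1(z-1)\ \ (z\in[1,2]),
\]
where, by \eqref{rekurencja}, $g_1=\psi_0+h_0$ and $h_1=-\psi_0+g_0=-\psi_0+f$, with $h_0=f(1-\cdot)$ and $d_0=h_0-g_0=f(1-\cdot)-f$. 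Since continuity of $\tif$ is already guaranteed by Proposition \ref{lem_main}, it remains to prove that each piece is $C^2$ and that the one-sided first and second derivatives agree at $z=0$ and $z=1$.

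First I would settle the smoothness of the pieces. The middle piece is $f\in C^2[0,1]$ by hypothesis, and $h_0=f(1-\cdot)$ is $C^2$, so everything reduces to showing $\psi_0\in C^2[0,1]$. Because $\int_0^1 d_0(y)\ud y=\int_0^1 f(1-y)\ud y-\int_0^1 f(y)\ud y=0$, the first term in \eqref{psin} vanishes for $n=0$ and $\psi_0(x)=2\int_0^x\e^{2(x-y)}d_0(y)\ud y$ satisfies the identity $\psi_0'=2\psi_0+2d_0$. Since $d_0\in C^2[0,1]$, bootstrapping this identity yields $\psi_0\in C^2[0,1]$, so $g_1$ and $h_1$ are $C^2$ as well.

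The substantive part is the matching at the breakpoints, and here the boundary conditions enter. Using $\psi_0(0)=0$, $\psi_0'(0)=2d_0(0)=2\bigl(f(1)-f(0)\bigr)$ and $\psi_0''(0)=4d_0(0)+2d_0'(0)=4\bigl(f(1)-f(0)\bigr)-2\bigl(f'(1)+f'(0)\bigr)$, one computes the one-sided derivatives of $g_1(\cdot-1)$ at $z=1$ and of $h_1(-\cdot)$ at $z=0$ and compares them with those of $f$. The value matches are automatic; the first-derivative match at $z=1$ reduces to $f'(1)=f(1)-f(0)$, the first-derivative match at $z=0$ to $f'(0)=f(1)-f(0)$, and both second-derivative matches reduce to the single relation $f'(0)+f'(1)=2\bigl(f(1)-f(0)\bigr)$. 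All of these hold precisely because $f\in D$, i.e.\ because $f$ satisfies \eqref{bc}; equivalently, in the form \eqref{bce}, the condition $F_0(f'')=0$ gives $f'(0)=f'(1)$ and $F_1(f'')=0$ gives $f'(1)=f(1)-f(0)$, which together deliver the three displayed identities.

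I expect the main obstacle to be the second-derivative matching. It is the only place where both boundary conditions are genuinely consumed, and one has to keep careful track of the reflection hidden in the piece $h_1(-\cdot)$ (which flips the sign of the odd-order derivatives) and of the two contributions to $\psi_0''(0)$. Once the three scalar identities above are isolated, however, the verification is a short computation, and the fact that they coincide exactly with \eqref{bc} is what makes the definition of $D$ the natural hypothesis for this lemma.
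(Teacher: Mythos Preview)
Your proof is correct and follows essentially the same approach as the paper: identify the three $C^2$ pieces of $\tif$ on $(-1,2)$ via the recursion \eqref{rekurencja}, and verify that the one-sided first and second derivatives match at the two breakpoints by computing $\psi_0'(0)$ and $\psi_0''(0)$ and invoking the boundary conditions \eqref{bc}. The paper phrases the matching conditions as $g_1'(0)=g_0'(1)$, $g_1''(0)=g_0''(1)$, $h_1'(0)=h_0'(1)$, $h_1''(0)=h_0''(1)$ and computes directly from the full formula for $\psi_0$, while you first observe $\int_0^1 d_0=0$ to simplify $\psi_0$ and then use the ODE $\psi_0'=2\psi_0+2d_0$; but these are cosmetic differences leading to the same scalar identities.
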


\begin{proof}  Since by assumption and \eqref{rekurencja} with $n=0$ the integral extension is twice continuously differentiable in the intervals $(-1,0),(0,1)$ and $(1,2)$, we need to check that the left-hand and right-hand derivatives of first and second orders agree at $0$ and $1$; this is the case when
\begin{equation}  g_1'(0) = g_0'(1),\quad  g_1''(0) = g_0''(1), \quad  h_1'(0) = h_0'(1)  \text { and } h_1''(0) = h_0''(1).  \label{pochodne} \end{equation} 
We will prove merely conditions pertaining to $g$'s, the proof related to $h$'s being similar. Using \eqref{rekurencja}, we see that $g_1' (0) = -2 \int_0^1d_0(y)\ud y + 2d_0(0) + h_0'(0)= 2[f(1) - f(0)] - f'(1).$ This equals $g_0'(1)= f'(1)$ by \eqref{bc}. Similarly,  $g''_1 (0) = 4[f(1)-f(0)]-2[f'(0) + f'(1)] + f''(1) = f''(1)=g_0''(1)$.
\end{proof} 

We can finally relate the property of preserving the moments of order 0 and 1 with the boundary conditions~\eqref{bc}.

\begin{thm} \label{tmain} The abstract Kelvin formula $\eqref{kelvin}$ defines a strongly continuous cosine family $(\cic (t))_{t\in \mathbb R}$ on $C[0,1]$. This family preserves both functionals $F_0$ and $F_1$, i.e.,
$$F_i \cic (t) f = F_i f\qquad \hbox{for all } f \in C[0,1]\hbox{ and }t\in \mathbb R, \; i=0,1.$$ 
The generator $A$ of $(\cic(t))_{t\in \mathbb R}$ is given by
\begin{eqnarray}
D(A)&=&D:=\{f\in C^2[0,1]:f'(0)=f'(1)=f(1)-f(0)\},\nonumber \\
Af&=&f''. \label{opa}
\end{eqnarray}
\end{thm}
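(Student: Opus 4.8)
The plan is to establish the three assertions in turn. Moment preservation is immediate from the construction: for $i=0,1$, $f\in C[0,1]$ and $t\in\R$, writing $\tif=Ef$ for the integral extension,
\begin{equation*}
F_i\cic(t)f=\int_0^1 x^i\,C(t)\tif(x)\ud x=F_i\bigl(C(t)\tif\bigr)=F_if,
\end{equation*}
the last equality being precisely condition (b) when $i=0$ and condition (c) when $i=1$.

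The cosine-family structure I would derive from the intertwining relation
\begin{equation*}
E\bigl(\cic(t)f\bigr)=C(t)\,Ef,\qquad t\in\R,\ f\in C[0,1],
\end{equation*}
which I would prove using the uniqueness part of Proposition \ref{lem_main}: it suffices to verify that $C(t)Ef$ satisfies (a)--(c) relative to the function $\cic(t)f$. Condition (a) holds by the definition \eqref{kelvin}, while (b) and (c) follow from the d'Alembert identity $C(s)C(t)=\tfrac12\bigl(C(s+t)+C(s-t)\bigr)$ for the basic family on $C(\R)$ together with the fact that $Ef$ itself satisfies (b)--(c): indeed $F_iC(s)\bigl(C(t)Ef\bigr)=\tfrac12(F_if+F_if)=F_if=F_i\cic(t)f$ for every $s$. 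Granting this, and writing $R$ for restriction to $[0,1]$ so that $\cic(t)=RC(t)E$, the functional equation for $(\cic(t))$ follows:
\begin{align*}
\cic(s+t)f+\cic(s-t)f &=R\bigl(C(s+t)+C(s-t)\bigr)Ef=2RC(s)C(t)Ef\\
&=2RC(s)E\bigl(\cic(t)f\bigr)=2\cic(s)\cic(t)f.
\end{align*}
Here $\cic(0)f=REf=f$, each $\cic(t)$ is bounded and linear by continuity of $E$ and of $C(t)$, and strong continuity follows since $t\mapsto C(t)Ef$ is continuous in $C(\R)$ and restriction to the compact interval $[0,1]$ turns almost uniform into uniform convergence.

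For the generator, I would first show $D\subseteq D(A)$ with $Af=f''$. Let $f\in D$. By Lemma \ref{de}, $Ef$ is twice continuously differentiable on $(-1,2)$, and since its restriction to $[0,1]$ is $f$ we have $(Ef)''=f''$ there. For $0<t<1$ and $x\in[0,1]$,
\begin{equation*}
\frac{2}{t^2}\bigl(\cic(t)f(x)-f(x)\bigr)=\frac{Ef(x+t)+Ef(x-t)-2Ef(x)}{t^2},
\end{equation*}
which converges to $(Ef)''(x)=f''(x)$ as $t\to0$, uniformly in $x\in[0,1]$ by Taylor's theorem and the uniform continuity of $(Ef)''$ on a compact neighbourhood of $[0,1]$ inside $(-1,2)$. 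Hence $f\in D(A)$ and $Af=f''$.

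For the reverse inclusion I would use a resolvent argument. Since $A$ generates a strongly continuous cosine family, $\lambda^2-A$ is injective for all sufficiently large $\lambda$. Writing $A_0$ for $f\mapsto f''$ with domain $D$, the previous step gives $A_0\subseteq A$, so it suffices to show that $\lambda^2-A_0$ is surjective for one such $\lambda$: then for $f\in D(A)$ one finds $\hat f\in D$ with $(\lambda^2-A_0)\hat f=(\lambda^2-A)f$, and injectivity forces $f=\hat f\in D$. Surjectivity of $\lambda^2-A_0$ means solving, for each $g\in C[0,1]$, the boundary value problem $\lambda^2 f-f''=g$ with $f'(0)=f'(1)=f(1)-f(0)$; expanding the general solution in $\cosh(\lambda\cdot)$, $\sinh(\lambda\cdot)$ and a variation-of-parameters particular solution reduces the two conditions \eqref{bc} to a $2\times2$ linear system for the two free coefficients. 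I expect the determinant of this system to be nonvanishing for every $\lambda>0$, and checking this nonvanishing is the step I anticipate as the main obstacle, the remainder of the argument being formal.
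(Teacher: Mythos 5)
Your proposal follows essentially the same route as the paper's own proof: the same uniqueness-based intertwining $ERC(s)Ef = C(s)Ef$ (verified via the d'Alembert identity and conditions (a)--(c)) to obtain the cosine equation, the same Lemma~\ref{de} plus Taylor argument for $D\subseteq D(A)$, and the same injectivity-plus-surjectivity argument to rule out $D\subsetneq D(A)$. The single step you leave open is closed in the paper by a direct computation: the determinant of the $2\times 2$ system equals $4\slam + 2\lam\cosh\slam - 4\slam\sinh\slam$, which is not identically zero as a function of $\lam$ --- and since your setup only requires one sufficiently large $\lam$ for which both injectivity of $\lam^2-A$ and nonvanishing of the determinant hold simultaneously, this weaker fact (rather than nonvanishing for every $\lam>0$) is all that needs checking.
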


\begin{proof} Let $R: C(\R) \to C[0,1]$ map a member of $C(\R)$ to its restriction to $[0,1]$. Then \eqref{kelvin} takes the form 
\begin{equation}\label{cp} \cic (t)= RC(t)E,  \qquad  t \in \R. \end{equation}

By \eqref{rekurencja}, a pair $(g_{n+1},h_{n+1})$ is obtained  from $(g_{n},h_{n})$ by means of a bounded linear operator mapping $C[0,1]\times C[0,1]$ into itself. Since for any $t,$ $RC(t)Ef $ depends merely on the finite number of such pairs, it follows that $\cic (t)$ is a bounded linear operator in $C[0,1]$. That the operators $\cic (t)$ preserve functionals $F_0$ and $F_1$ is clear by Proposition \ref{lem_main}.

Fix $f \in C[0,1]$ and $s \in \R.$ Clearly, $C(s)Ef$ extends $RC(s)Ef$ and, by the cosine equation for $C$ and the definition of $Ef$, we have 
\[
F_i C(t) C(s) Ef= F_i f= F_i RC(s)Ef,\qquad i=0,1,\; t \in \R.
\]
By uniqueness of integral extensions, this shows that $C(s)Ef$ is the integral extension of $RC(s)Ef$:
 \[
 ERC(s)Ef = C(s)Ef,\qquad s\in \mathbb R.
 \]
  Using this and the cosine equation for $C$, we check that 
  \[
  2\cic (t)\cic (s) f = \cic (t+s)f + \cic (t-s) f,\qquad  t,s \in \R,
  \] i.e., that $\cic $ is a cosine family. This family is strongly continuous, i.e., we have $
\lim_{t\to 0} R C(t) Ef = f$ for all $f\in C[0,1]$, since $Ef$, as restricted to any compact interval,  is a uniformly continuous function, and on $[0,1]$ it coincides with $f.$ 

Turning to the characterization of the generator: 
Lemma \ref{de} and the Taylor formula imply that for $f \in D,$
$$\lim_{t\to 0} \frac 2{t^2} (C(t) \tif  (x) - \tif (x)) = \tif''(x),  \qquad x \in (-1,2);$$ the limit is uniform in $x\in [0,1]$ since $\tif''$ is uniformly continuous in any compact subinterval of $(-1,2)$. By \eqref{cp} this proves that $f$ belongs to $D(A)$ and we have $Af = f''.$   

Finally, we check that there is a $\lam >0$ such that for all $g \in C[0,1]$ there exists $f \in D$ such that $\lam f - f'' = g.$ Since $\lam - A$ is injective for some large $\lam $ and its range is $C[0,1]$, this will show that $D$ cannot be a proper subset of $D(A)$ (see e.g. \cite{kniga} p. 267). 

The general solution to this ordinary differential equation is 
\[
f(x) = C_1 \e^{\sqrt \lam x} + C_2 \e^{-\sqrt \lam x} - \frac 1{\sqrt{\lam}}\int_0^x \sinh [\sqrt \lam (x- y) ] g(y) \ud y.
\]
Such an $f$ satisfies \eqref{bc} if and only if $C_1$ and $C_2$ satisfy the following system of equations:
\begin{align*} 
(\slam - \e^{\slam} + 1)C_1 &+ (1- \slam  - \e^{-\slam}) C_2 \\ &= -\frac 1{\slam } \int_0^1 \sinh (1-y) g(y) \ud y,\\
(1+\slam \e^{\slam}  - \e^{\slam})C_1 &+ (1- \slam\e^{-\slam} - \e^{-\slam}) C_2 \\ & = 
 \int_0^1 \cosh (1-y) g(y) \ud y 
 -\frac 1{\slam } \int_0^1 \sinh (1-y) g(y) \ud y. \end{align*}
The (unique) choice of such $C_1$ and $C_2$ is possible, since the determinant of this linear system equals $4 \slam + 2\lam \cosh \slam - 4\slam \sinh \slam \not \equiv 0$.   \end{proof}

Recapitulating the results of this section, we see in particular that requiring that a cosine family generated by a one-dimensional Laplace operator preserves the functionals \eqref{fi} is (at least in the context of Kelvin's method of images) equivalent to assuming the  boundary conditions \eqref{bc}. In other words, there is a unique cosine family of the form \eqref{kelvin} (with continuous extension operator $E$) that preserves both functionals $F_0$ and $F_1$: this is the cosine family generated by $A$, the one-dimensional Laplace operator with boundary conditions \eqref{bc}. 

However, \eqref{bce} suggests a stronger result. Before presenting it, we recall (see, e.g., \cite[proof of
Theorem  3.14.17]{abhn} or \cite[Theorem  8.7]{goldstein}), that if $({\rm Cos}(t))_{t\in \mathbb R} $ 
is a strongly continuous cosine family in a Banach space $X$, then the  abstract
Weierstrass formula
\begin{equation}
  \label{eq:1}
  S(t)x = \frac{1}{\sqrt{\pi t}} \int_0^{\infty} \e^{-\tau^2/4t}
  {\rm Cos}(\tau)x \ud \tau
  \quad
  (t > 0, \ x \in X),
\end{equation}
coupled with $S(0)=Id_X$ (identity operator in $X$), defines a strongly continuous semigroup $(S(t))_{ t\ge 0}$ with the generator equal to the generator of the cosine family.

\newcommand{\ap}{A_{\rm{p}}}
\begin{thm}\label{lateron} Among all semigroups generated by one-dimensional Laplace operators in $C[0,1]$, there is only one that preserves the moments of order 0 and 1, and this is the semigroup with domain given by boundary conditions \eqref{bce}, i.e. the one given by the Weierstrass formula applied to the cosine family \eqref{kelvin}.  \end{thm}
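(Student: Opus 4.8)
The plan is to reduce the global preservation property to an infinitesimal condition on the generator's domain, and then to invoke the standard rigidity of semigroup generators. Existence is essentially already in hand: applying the abstract Weierstrass formula \eqref{eq:1} to the cosine family \eqref{kelvin} produces a strongly continuous semigroup $(S(t))_{t\ge 0}$ whose generator is precisely $\ap$, the second derivative with domain $D$. Since $F_i$ is continuous, one may pass it under the integral in \eqref{eq:1}, and using $\frac{1}{\sqrt{\pi t}}\int_0^\infty \e^{-\tau^2/4t}\ud \tau = 1$ together with the fact (from Theorem \ref{tmain}) that $F_i\cic(\tau)f = F_i f$, conclude $F_i S(t)f = F_i f$ for $i=0,1$. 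Thus this particular semigroup preserves both moments, and it only remains to prove uniqueness.

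First I would fix an arbitrary realization $B$ of the Laplace operator, so that $Bf = f''$ on a domain $D(B)\subseteq C^2[0,1]$, generating a strongly continuous semigroup $(T(t))_{t\ge 0}$ that preserves $F_0$ and $F_1$. For $f\in D(B)$ the orbit $t\mapsto T(t)f$ is continuously differentiable with derivative $BT(t)f$, so $t\mapsto F_i T(t)f$ is differentiable with derivative $F_i B T(t)f$. Because $F_i T(t)f \equiv F_i f$ is constant in $t$, this derivative vanishes identically; evaluating at $t=0$ yields $F_i(f'') = F_i Bf = 0$ for $i=0,1$. Hence every $f\in D(B)$ satisfies \eqref{bce}, and by the equivalence of \eqref{bce} with \eqref{bc} recorded above, this gives $D(B)\subseteq D$, that is, $B\subseteq \ap$.

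It then remains to upgrade the inclusion $B\subseteq \ap$ to equality, for which I would use the elementary fact that a generator of a strongly continuous semigroup admits no proper generator extension. Choose $\lambda$ in the resolvent set of both $B$ and $\ap$ (any sufficiently large real number works). Given $f\in D(\ap)$, set $g:=(\lambda-\ap)f$; surjectivity of $\lambda-B$ produces $h\in D(B)\subseteq D(\ap)$ with $(\lambda-B)h = g$, and since $B\subseteq \ap$ this forces $(\lambda-\ap)h = g = (\lambda-\ap)f$. Injectivity of $\lambda-\ap$ then gives $f=h\in D(B)$, whence $D(\ap)\subseteq D(B)$ and therefore $B=\ap$. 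This is the same line of reasoning already invoked in the proof of Theorem \ref{tmain} via \cite{kniga}. Combining the two steps shows that $\ap$ is the only realization of the one-dimensional Laplacian whose semigroup preserves $F_0$ and $F_1$.

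I expect the only delicate point to be the passage from the global identity $F_i T(t)f = F_i f$ to the pointwise condition $F_i(f'')=0$, which must be justified for \emph{every} $f\in D(B)$ (not merely on a core), using the continuity of $F_i$ and the differentiability of semigroup orbits on the domain. By contrast, the genuinely hard analytic content, namely that the operator with domain $D$ actually generates, has already been supplied by Theorem \ref{tmain}, so the remaining argument is essentially soft functional analysis.
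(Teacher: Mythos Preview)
Your proof is correct and follows essentially the same approach as the paper: differentiate $t\mapsto F_iT(t)f$ at $t=0$ to obtain $F_i(f'')=0$ for every $f$ in the generator's domain, and then use that a generator admits no proper generator extension. The paper's proof is terser, leaving the existence part and the ``no proper extension'' argument implicit in the phrase ``It suffices to show that $f$ satisfies \eqref{bce}'', whereas you spell both out explicitly.
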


\begin{proof} Let $\sem{\ap}$ be a moments-preserving semigroup generated by a one-di\-men\-sional Laplace operator in $C[0,1],$  and let $f \in D(\ap)$. It suffices to show that $f$ satisfies the boundary conditions \eqref{bce}. Consider 
\[
u_i (t) := F_i (\e^{t\ap}f ),\qquad i =0,1,\; t\ge 0.
\]
The scalar-valued functions $u_i$ are differentiable with $u_i'(t)= F_i (\ap \e^{t\ap}f)=0,$ the last equality following from the fact that the semigroup preserves the moments. Taking $t=0$ and noting that $\ap f= f''$, we obtain \eqref{bce}. \end{proof}

It goes without saying that this proposition implies uniqueness of the moments-preserving cosine family as well: by Weierstrass formula, if a cosine family preserves the moments, then so does the corresponding semigroup. Since there is only one moments-preserving semigroup, there can be no more cosine families. 

\begin{rem}
The argument used in the proof of Theorem~\ref{lateron} shows that, if a semigroup or a cosine family generated by a realization of the Laplace operator preserves the moment of order $i$, then for $f$ in the domain of the generator we have \begin{equation}F_i(f'')=0. \label{wry} \end{equation} 
Using the identity 
\[
F_i (f') = f(1) - iF_{i-1} f,
\]
which holds for all $ i \ge 1$ and all $f \in C^1[0,1]$, we check that~\eqref{wry} holds if and only if \begin{equation}
\begin{array}{rcll} 
f'(0)&=&f'(1) \qquad&\hbox{if }i =0,\\
f'(1) &=& f(1) - f(0) \qquad&\hbox{if }i =1, \\ 
 if(1)&= & i(i-1)F_{i-2}f - f'(1). \qquad&\hbox{if }i \ge 2.
\end{array} \label{unki} 
\end{equation}
A big question is of course if the requirement that two moments, say $F_i$ and $F_j$ ($i\not =j$), are preserved,  determines a cosine family or a semigroup generated by a Laplace operator.
A generation theorem for semigroups related to moments $F_0$ and $F_j$ with arbitrary $j\not =0$ has been obtained in~\cite[Thm 3.4]{MugNic12}.
\end{rem}

\section{Decomposition of $\cic$}\label{s2}

\begin{center}
\begin{figure}
\begin{tikzpicture}
\draw [->] (-4.5,0)--(7,0); 
\draw [->] (0,-1)--(0,2.5);
\draw [dashed] (2,-1)--(2,2.5);
\draw [dashed] (4,-1)--(4,2.5);
\draw [dashed] (-2,-1)--(-2,2.5);
\draw [color=blue] plot [smooth] coordinates {(0,1) (0.5,1.5) (1,0)};
\draw [color=blue] plot [smooth] coordinates {(1,0) (1.5,1.5) (2,1)};
\draw plot [smooth] coordinates {(0,1) (-0.5,1.5) (-1,0)};
\draw plot [smooth] coordinates {(-1,0) (-1.5,1.5) (-2,1)};
\draw plot [smooth] coordinates {(4,1) (4.5,1.5) (5,0)};
\draw plot [smooth] coordinates {(5,0) (5.5,1.5) (6,1)};
\draw plot [smooth] coordinates {(-4,1) (-3.5,1.5) (-3,0)};
\draw plot [smooth] coordinates {(-3,0) (-2.5,1.5) (-2,1)};
\draw plot [smooth] coordinates {(4,1) (3.5,1.5) (3,0)};
\draw plot [smooth] coordinates {(3,0) (2.5,1.5) (2,1)};
\node [below] at (2.2,0) {1};
\node [below] at (0.2,0) {0};
\node [below] at (4.2,0) {2};
\node [below] at (-1.8,0) {-1};
\end{tikzpicture}\caption{The integral extension of an $f\in \cev$.}\label{fig1}
\end{figure}
\end{center}

Let $\cod , \cev \subset C[0,1]$ be the (closed) subspaces of functions $f$ with graphs that are, respectively, \textit{asymmetric} and \textit{symmetric} about $\frac 12,$ i.e.,
\[
f(1-x) = -f(x)\qquad \hbox{and}\qquad f(1-x) = f(x),\qquad x \in [0,1],
\]
respectively. Recall that each $f \in C[0,1]$ is a sum $f= \fod + \fev $ of its odd and even parts (defined by 
\[
\fod (x) := \frac 12[f(x)- f(1-x)]
\]
and 
\[
\fev (x) := \frac 12 [f(x) + f(1-x)],
\]
respectively), and this representation is unique.  Moreover, the maps $f\mapsto \fev $ and $f\mapsto \fod $ are projections of norm one onto $\cev$ and $\cod $, respectively. We can introduce in the same way the spaces $C_{\rm odd}(\R)$ and $C_{\rm even}(\R)$ of all continuous functions on $\R$ that are, respectively, asymmetric and symmetric about $\frac 12$. (Observe that $C_{\rm odd}(\R)$ and $C_{\rm even}(\R)$ are in general not asymmetric and symmetric about $0$, hence they are not odd or even, respectively, in the usual sense.)

Symmetries hidden in \eqref{rekurencja} allow reducing analysis to the subspaces $\cod $ and $\cev$, treated separately.  We begin with the following observation, where $(f_n)_{n\in \mathbb N},(g_n)_{n\in \mathbb N}$ are the function families introduced in the proof of Proposition~\ref{lem_main}.

\begin{lem} \label{lemacik} Suppose $f \in \cev$. Then
\begin{equation} \label{parzyste} h_n=g_n= f, \qquad n \in \mathbb N. \end{equation}
If $f\in \cod$, then 
\begin{equation} \label{parzyste2} h_n+g_n=0. \qquad  n \in \mathbb N. \end{equation}
Accordingly, $\tif\in C_{\rm even}(\R)$ if $f\in \cev$, whereas $\tif\in C_{\rm odd}(\R)$ if $f\in \cod$.
\end{lem}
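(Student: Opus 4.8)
The plan is to treat the two assertions \eqref{parzyste} and \eqref{parzyste2} separately and then read off the symmetry of the full extension $\tif$ from the reflection correspondence between $\tif$ and the pairs $(g_n,h_n)$. For the even case I would argue by induction on $n$. The base case is immediate: since $f(1-x)=f(x)$, formula \eqref{def:h0g0} gives $g_0=h_0=f$. For the inductive step, assume $g_n=h_n=f$; then $d_n=h_n-g_n=0$, so \eqref{psin} forces $\psi_n\equiv 0$, and the recurrence \eqref{rekurencja} collapses to $g_{n+1}=\psi_n+h_n=f$ and $h_{n+1}=-\psi_n+g_n=f$. This yields \eqref{parzyste}.

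For the odd case no genuine induction is needed. Summing the two equations in \eqref{rekurencja} gives $g_{n+1}+h_{n+1}=g_n+h_n$ (this is precisely \eqref{symetria}), so the sum $g_n+h_n$ is independent of $n$. Since $f(1-x)=-f(x)$ makes $g_0+h_0=f(x)+f(1-x)=0$ by \eqref{def:h0g0}, we obtain $g_n+h_n=0$ for all $n$, which is exactly \eqref{parzyste2}.

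Finally, for the symmetry of $\tif$ I would exploit the reflection dictionary encoded in \eqref{gnhn}. Writing $x=n+s$ with $n\in\N$ and $s\in[0,1]$, one has $\tif(x)=g_n(s)$, while $\tif(1-x)=\tif(1-s-n)=h_n(s)$. Thus on each interval $[n,n+1]$ the direct values of $\tif$ are governed by $g_n$ and its reflected values about $\tfrac12$ by $h_n$. In the even case \eqref{parzyste} gives $g_n=h_n$, whence $\tif(1-x)=\tif(x)$; in the odd case \eqref{parzyste2} gives $h_n=-g_n$, whence $\tif(1-x)=-\tif(x)$. Since this holds for every $x\ge 0$ and the reflection $x\mapsto 1-x$ carries $[0,\infty)$ onto $(-\infty,1]$, the identity propagates to all of $\R$ (given $x<0$ one applies it at the point $1-x\ge 0$). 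This places $\tif$ in $C_{\rm even}(\R)$, respectively $C_{\rm odd}(\R)$.

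I expect the only delicate point to be the bookkeeping in the reflection correspondence: making sure that $\tif(1-x)$ is indeed read off from $h_n$ on the correct interval, and that establishing the identity for $x\ge 0$ suffices for all real $x$. The algebraic core, by contrast, is light, since the even case is a one-line induction once $\psi_n$ vanishes, and the odd case follows at once from the telescoping already present in \eqref{rekurencja}.
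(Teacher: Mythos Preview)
Your proof is correct and follows essentially the same approach as the paper: induction via \eqref{rekurencja} with $\psi_n=0$ for the even case, the telescoping identity \eqref{symetria} for the odd case, and the dictionary \eqref{gnhn} to read off the symmetry of $\tif$. Your additional remark that the identity on $[0,\infty)$ suffices for all of $\R$ (since $x\mapsto 1-x$ maps $[0,\infty)$ onto $(-\infty,1]$) is a small but welcome elaboration beyond what the paper writes explicitly.
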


\begin{proof} Let $f\in \cev$. We proceed by induction. For $n=0$ the claim is true by definition. Suppose~\eqref{parzyste} holds for some $n\in \mathbb N.$ Then $d_n=0$, and $\psi_n$ introduced in \eqref{rekurencja} vanishes. Hence, the latter formula implies $h_{n+1} = g_n = f$ and $g_{n+1} = h_n = f$.
Therefore, $\tif$ is even: $\tif (x + n)= g_n(x) = h_n(x)= \tif (1-x-n), n\in \N, x \in [0,1].$

Now, let $f\in \cod$. Since by assumption $g_0 + h_0 =0$, \eqref{parzyste2} can be deduced from \eqref{symetria} by induction. Hence,
$\tif(x+n)=g_n(x)=-h_n(x)=-\tif(1-(x+n)), n\in \mathbb N, x\in [0,1].$
This completes the proof. \end{proof}
\begin{ex}\label{example.1}
A typical graph of the integral extension of an $f \in \cev$ is depicted in Figure \ref{fig1}. In particular for even functions, $\sup_{x \in \R}|\tif (x) | = \sup_{x \in [0,1]}|f(x)|.$ The integral extensions of odd functions do not have the latter property; see Figure \ref{fig2}. In fact, an integral extension of an odd function will typically be unbounded: for example, a direct computation shows that  $\tif(x):= 2x - 1$, $x \in \mathbb R$, is the integral extension of $f(x):= 2x - 1$, $x\in [0,1]$.
\end{ex}

\begin{center}
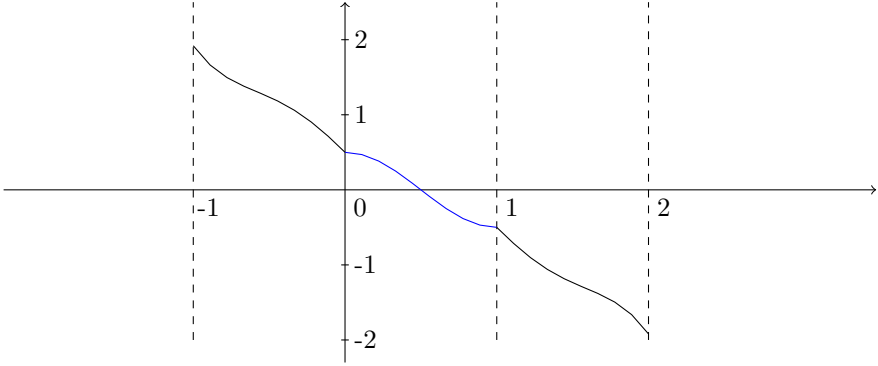
\begin{figure}
\begin{tikzpicture}
\draw [->] (-4.5,0)--(7,0); 
\draw [->] (0,-2.3)--(0,2.5);
\draw [dashed] (2,-2)--(2,2.5);
\draw [dashed] (4,-2)--(4,2.5);
\draw [dashed] (-2,-2)--(-2,2.5);
\node [below] at (2.2,0) {1};
\node [below] at (0.2,0) {0};
\node [below] at (4.2,0) {2};
\node [below] at (-1.8,0) {-1};
\draw (-.05,1)--(.05,1);
\draw (-.05,2)--(.05,2);
\draw (-.05,-1)--(.05,-1);
\draw (-.05,-2)--(.05,-2);
\node [right] at (0,1) {1};
\node [right] at (0,2) {2};
\node [right] at (0,-1) {-1};
\node [right] at (0,-2) {-2};
\draw[blue, domain=0:1, samples=10] plot (2*\x, {0.5*cos(3.141*\x r)});
\draw[black, domain=1:2, samples=10] plot (2*\x, {0.1442*(2 *cos (3.141*(\x-1) r) -  3.141 * sin (3.141 * (\x-1) r)- 2*exp(2*(\x-1) ) ) - 0.5*cos(3.141 *(\x-1) r)});
\draw[black, domain=-1:0, samples=10] plot (2*\x,  {-0.1442*(2 *cos (-3.141*\x r) -  3.141 * sin (-3.141 *\x r)- 2*exp(-2*\x ) ) + 0.5*cos(-3.141 *\x r)});
\end{tikzpicture}\caption{The integral extension of the $f\in \cod$  given by $f(x):=\frac 12 \cos (\pi x)$; in this case $g_1(x)= \frac 2{\pi^2 + 4} (2\cos (\pi x) - \pi \sin (\pi x) - 2\e^{2x}) - \frac 12 \cos (\pi x).$}\label{fig2}
\end{figure}
\end{center}
\vspace{-0.7cm}
\begin{prop}\label{p2} 
\begin{enumerate}[{\rm (a)}]
\item The cosine family $(\cic(t))_{t\in \mathbb R}$ leaves the subspace \linebreak $\cev$ invariant, and is a cosine family of contractions there. The generator $A_0$ of $(\cic(t))_{t\in \mathbb R}$ restricted to  $\cev$ is given by $A_0f = f''$ with domain composed of twice continuously differentiable even functions satisfying $f'(0)=0$.
\item  The cosine family $(\cic(t))_{t\in \mathbb R} $ leaves the subspace $\cod$ invariant. The generator $A_1$ of $(\cic(t))_{t\in \mathbb R}$ restricted to  $\cod$ is given by $A_1f = f''$ with domain composed of twice continuously differentiable odd functions satisfying $f'(0)=-2f(0).$ 
\item Denoting by $(C_{A_0}(t))_{t\in \mathbb R}$ and $(C_{A_1}(t))_{t\in \mathbb R}$ the cosine family $(\cic(t))_{t\in \mathbb R}$ as restricted to the subspaces $\cev $ and $\cod $, respectively, we have 
\begin{equation} \label{rozklad} \cic (t) f = C_{A_0}(t) \fev + C_{A_1}(t) \fod , \qquad f \in C[0,1], t \in \R. \end{equation}
\end{enumerate} \end{prop}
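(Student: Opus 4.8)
The plan is to exploit the factorization $\cic(t)=RC(t)E$ from \eqref{cp} together with the symmetry information in Lemma~\ref{lemacik}, and then to read off the generators $A_0,A_1$ as the parts of the generator $A$ of Theorem~\ref{tmain} in the two invariant subspaces.

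First I would establish that the D'Alembert family $C(t)$ preserves symmetry about $\frac12$. A one-line computation shows that if $u\in C_{\rm even}(\R)$ then $C(t)u(1-x)=\frac12(u(1-(x-t))+u(1-(x+t)))=C(t)u(x)$, and likewise $C(t)u$ is odd about $\frac12$ whenever $u$ is odd about $\frac12$. Combined with Lemma~\ref{lemacik}, which says $Ef=\tif$ lies in $C_{\rm even}(\R)$ (resp.\ $C_{\rm odd}(\R)$) when $f\in\cev$ (resp.\ $f\in\cod$), this gives $\cic(t)f=RC(t)\tif\in\cev$ (resp.\ $\cod$), so both subspaces are invariant. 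For the contraction claim in (a), I would use that for $f\in\cev$ the extension is the period-one reflection, $g_n=h_n=f$ from \eqref{parzyste}, so that $\sup_{\R}|\tif|=\|f\|$ (cf.\ Example~\ref{example.1}); since $C(t)$ never increases the supremum norm, $\|\cic(t)f\|_{C[0,1]}\le\sup_{\R}|\tif|=\|f\|$.

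Next I would identify the generators. Being restrictions of a strongly continuous cosine family to closed invariant subspaces, $(C_{A_0}(t))_{t\in\mathbb R}$ and $(C_{A_1}(t))_{t\in\mathbb R}$ are again strongly continuous cosine families, and each generator is the part of $A$ in the corresponding subspace: $D(A_0)=\{f\in D\cap\cev:f''\in\cev\}$ and similarly for $A_1$. The key observation is that differentiating $f(1-x)=\pm f(x)$ twice gives $f''(1-x)=\pm f''(x)$, so $f''$ stays in the subspace and the extra requirement $f''\in\cev$ (resp.\ $\cod$) is automatic; hence $D(A_0)=D\cap\cev$ and $D(A_1)=D\cap\cod$. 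It then remains to simplify the three boundary relations $f'(0)=f'(1)=f(1)-f(0)$ under each symmetry. For even $f$ one has $f(1)=f(0)$ and $f'(1)=-f'(0)$, so the relations collapse to $f'(0)=0$; for odd $f$ one has $f(1)=-f(0)$ and $f'(1)=f'(0)$, so they collapse to $f'(0)=-2f(0)$. These are exactly the stated descriptions of $A_0$ and $A_1$.

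Finally, part (c) is immediate: writing $f=\fev+\fod$ and using linearity of $\cic(t)$ together with invariance gives $\cic(t)f=\cic(t)\fev+\cic(t)\fod=C_{A_0}(t)\fev+C_{A_1}(t)\fod$, which is \eqref{rozklad}. I expect the only genuinely delicate point to be the generator identification in the third paragraph: one must argue carefully that the restricted generator is the \emph{full} part of $A$, so that its domain is all of $D\cap\cev$ (resp.\ $D\cap\cod$) rather than a possibly smaller set, and this is precisely where the invariance $f''\in\cev$ (resp.\ $\cod$) of the symmetric subspaces under the second derivative does the work.
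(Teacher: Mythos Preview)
Your proof is correct and follows essentially the same route as the paper: invariance via the symmetry of $\tif$ established in Lemma~\ref{lemacik} (your computation that $C(t)$ preserves parity about $\tfrac12$ is exactly the calculation~\eqref{rachun}), contractivity on $\cev$ from $\sup_{\R}|\tif|=\|f\|$, and identification of $A_0,A_1$ as the parts of $A$ in the two subspaces. You are in fact more explicit than the paper in spelling out how the boundary conditions $f'(0)=f'(1)=f(1)-f(0)$ collapse under each parity, which is a nice addition.
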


\begin{proof} (a) Let $f\in \cev$. For $t \in \R$ and $x \in [0,1]$, 
\begin{eqnarray} \label{rachun} 
\cic (t)f (1-x)& =& \frac 12 [\tif (1-x+ t) + \tif(1- x- t)]\nonumber \\
& =& \frac 12 [\tif (x-t) + \tif (x+t)] \nonumber \\
& =& \cic (t) {f} (x) ,\end{eqnarray}
due to the fact that $\tif$ is even.	
By \eqref{parzyste}, $\sup_{x\in \R} |\tif (x) | = \sup_{x\in [0,1]}|f(x)|.$ Hence, 
$$\sup_{t\in \R}  \| \cic (t) f \| \le \sup_{t \in \R} \sup_{x\in [0,1]}   \frac 12 \big|\tif (x-t) + \tif (x+t)\big|  \le \sup_{x\in \R} |\tif (x) | = \|f\|.$$  The claim concerning $A_0$  can be deduced from the fact that $A_0$ is the part of $A$ in  $\cev $ or directly from \eqref{parzyste} -- the integral extension of a twice continuously differentiable $f\in \cev$ is twice continuously differentiable if $f'(0)=0.$

(b) By Lemma \ref{lemacik}, $\tif $ is odd.  Therefore, the first claim follows by a calculation similar to \eqref{rachun}. The characterization of the generator is also proved as in (a). (c) is an immediate consequence of (a) and (b). \end{proof}

It is perhaps worth noting that \eqref{parzyste} implies that for $f\in \cev$, $\tif $ is periodic with period 1: $\tif (x+1) = f(x), x \in \R.$ Hence, $\cic (t+1) f = \cic (t)f$. In contrast, behavior of $(\cic(t))_{t\in \mathbb R}$ as restricted to $\cod$ is not so evident: in particular, it is even unclear if the cosine family is bounded on this subspace.

Let us take a closer look at the even part of the moments-preserving cosine family. The generator $A_0$ of this part, described in Proposition \ref{p2} (a), has a natural extension to a densely defined operator in $C[0,1]$, which by an abuse of notation we denote $A_0$ again, given by
\begin{eqnarray*}
D(A_0)&:=&\{f\in C^2[0,1]:f'(0)=f'(1)=0\},\\
A_0 f&:=&f''.
\end{eqnarray*}
This is of course a very-well known object, the one-dimensional Neumann Laplacian: the generator 
of a cosine family in $C[0,1]$, which we denote again $(C_{A_0}(t))_{t\in \mathbb R}$,  and of the related Feller  semigroup  of the Brownian motion with two reflecting barriers at $0$ and $1$. The cosine family generated by $A_0$ is given by the abstract Kelvin formula \eqref{kelvin} with $\tif $ denoting the unique extension of $f$ whose graph is symmetric about $0$ and $1$ (see \cite[pp. 21-13]{weinberger}, cf.\ also~\cite[pp. 340-342]{feller}, where the case of the related semigroup is covered). Since for $f \in \cev $, this extension coincides with the integral extension (see Lemma  \ref{lemacik}, and Figure \ref{fig1}), the cosine families $(C_{A_0}(t))_{t\in \mathbb R}$ and $(\cic (t))_{t\in \mathbb R}$
coincide on $\cev .$ In particular, $C_{A_0}(t)$ leaves $\cev$ invariant for all $t\in \mathbb R$, and  we obtain the following, somewhat unexpected corollary. 

\begin{cor} \label{wnosek1} For $f \in \cev $, the first two moments are preserved along the trajectory $ t\mapsto C_{A_0}(t) f $ of the cosine family $(C_{A_0}(t))_{t\in \mathbb R}$ generated by the Neumann Laplacian. \end{cor}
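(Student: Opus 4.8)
The plan is to read the statement off directly from the identification, established in the paragraph preceding the corollary, of the two cosine families on the even subspace, combined with the moments-preservation proved in Theorem~\ref{tmain}. The one ingredient already secured is that $(C_{A_0}(t))_{t\in \R}$ and $(\cic(t))_{t\in \R}$ coincide on $\cev$: for $f\in \cev$, Lemma~\ref{lemacik} shows that the integral extension of $f$ is even (and periodic with period $1$), hence agrees with the reflecting extension about $0$ and $1$ that defines the Neumann cosine family, so that $C_{A_0}(t)f = \cic(t)f$ for every $t\in \R$.

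First I would fix $f\in \cev$ and $t\in \R$ and invoke this coincidence to write $C_{A_0}(t)f = \cic(t)f$. Next, Theorem~\ref{tmain} guarantees that $(\cic(t))_{t\in \R}$ preserves both functionals, i.e.\ $F_i\,\cic(t)f = F_i f$ for $i=0,1$. Chaining the two equalities gives $F_i\, C_{A_0}(t)f = F_i\,\cic(t)f = F_i f$ for $i=0,1$ and all $t\in \R$, which is precisely the asserted preservation along the trajectory $t\mapsto C_{A_0}(t)f$.

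No genuine obstacle remains, since the substantive work lies entirely in the earlier results: Lemma~\ref{lemacik}, the representation of the Neumann cosine family through the reflecting extension, and Theorem~\ref{tmain}. The only subtlety I would be careful to flag is that the equality $C_{A_0}(t)f = \cic(t)f$ must hold for all $t\in \R$, not just for small $t$; this is automatic here because both families are defined globally via extensions of $f$ that agree on the whole of $\R$ once $f\in \cev$. Finally, I would stress --- as the surrounding discussion does --- that the point of the corollary is its contrast with the behaviour of the Neumann family on all of $C[0,1]$: moment-preservation is a subspace-specific phenomenon, since outside $\cev$ the integral extension and the reflecting extension no longer coincide.
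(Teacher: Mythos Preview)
Your proposal is correct and follows essentially the same approach as the paper: the corollary is presented there as an immediate consequence of the coincidence (established in the preceding paragraph via Lemma~\ref{lemacik}) of the integral extension with the reflecting extension for even $f$, combined with the moment-preservation of $(\cic(t))_{t\in\R}$ from Theorem~\ref{tmain}. The paper does not even give a formal proof, treating the statement as a direct consequence of that discussion.
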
  

Similarly, $A_1$, the generator of the odd part, can be naturally extended to the following densely defined operator in $C[0,1]$:
\begin{eqnarray*}
D(A_1)&:=&\{f\in C^2[0,1]:f'(0)=-2f(0), f'(1)=2f(1)\},\\
A_1 f&:=&f''.
\end{eqnarray*}
This operator generates a cosine family in $C[0,1]$ (see the main theorem in \cite{chinczyki} or in \cite{ChiKeyWar07}), denoted again $(C_{A_1}(t))_{t\in \mathbb R}$. The latter cosine family is given by the abstract Kelvin formula, where the integral extension is replaced by $\tif $ given for $x \in [-1,2]$ by (see \cite[Lemma 3.1]{ChiKeyWar07})  
$$ \tif (x)   = \begin{cases} f(-x) + 4 \e^{-2x} \int_0^{-x} \e^{-2y} f(y) \ud y , & x \in [-1,0), \\
f(x), & x \in [0,1], \\
f(2-x) + 4 \e^{2(x-1)} \int_0^{x-1} \e^{-2y} f(1-y)\ud y , & x \in (1,2].
\end{cases} $$
It follows that the graph of $\tif $ (as restricted to the interval $[-1,2]$) is asymmetric about $\frac 12$, provided $f \in \cod .$ Hence the operators $C_{A_1}(t), t \in [-1,1]$, leave $\cod $ invariant and the same is true for all $t \in \R $ by the cosine equation. This together with Proposition \ref{p2} (b), shows that $(C_{A_1}(t))_{t\in \mathbb R}$ and $(\cic(t))_{t\in \mathbb R} $ coincide on $\cod .$  

The analysis presented above provides a slightly different meaning for
\eqref{rozklad}. While in Proposition \ref{p2} (c), $(C_{A_0}(t))_{t\in \mathbb R}$ and $(C_{A_1}(t))_{t\in \mathbb R}$ were interpreted as the cosine families being restrictions of $(\cic(t))_{t\in \mathbb R} $ to the subspaces of even and odd functions, respectively, now we have proved that they can also be seen as the cosine families defined on the whole of $C[0,1].$ Moreover, we obtain the following analogue of Corollary \ref{wnosek1}. 

\begin{cor} \label{wnosek2} For $f \in \cod $, the first two moments are preserved along the trajectory $t \mapsto C_{A_1}(t)f$ of the cosine family $(C_{A_1}(t))_{t\in \mathbb R}$. \end{cor}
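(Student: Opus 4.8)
The plan is to obtain the statement as an immediate transfer of moment preservation from the full family to its odd part, relying on two facts already in hand. The first is Theorem~\ref{tmain}, which guarantees that $(\cic(t))_{t\in\R}$ preserves both $F_0$ and $F_1$. The second is the identification, established in the discussion immediately preceding the corollary (and resting on the explicit extension formula from \cite{ChiKeyWar07}), that $(C_{A_1}(t))_{t\in\R}$ and $(\cic(t))_{t\in\R}$ coincide on the subspace $\cod$.

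First I would fix $f\in\cod$. Then $\fev=0$ and $\fod=f$, so the decomposition \eqref{rozklad} collapses to $\cic(t)f = C_{A_1}(t)f$ for every $t\in\R$; equivalently, one simply invokes the coincidence of the two families on $\cod$. Next, for $i=0,1$, Theorem~\ref{tmain} gives $F_i\,\cic(t)f = F_i f$ for all $t\in\R$, and substituting $\cic(t)f = C_{A_1}(t)f$ yields $F_i\,C_{A_1}(t)f = F_i f$, which is exactly the assertion.

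There is in effect no genuine obstacle: this is the odd counterpart of Corollary~\ref{wnosek1}, and all the real work has already been carried out in the preceding results. The only delicate point — and it is already settled — is the identification of $C_{A_1}$ with $\cic$ on $\cod$; once that coincidence is available, moment preservation carries over verbatim from the full moments-preserving family. Should a fully self-contained argument be preferred, one could instead insert the explicit extension $\tif$ of $f\in\cod$ recorded in \cite{ChiKeyWar07} into the D'Alembert formula \eqref{dalemb} and verify directly that $\frac{d}{dt}F_i\,C_{A_1}(t)f=0$ for $i=0,1$, but this merely reproves by hand the content already encoded in Theorem~\ref{tmain}.
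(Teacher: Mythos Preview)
Your proposal is correct and matches the paper's own reasoning: the corollary is stated without a separate proof, being an immediate consequence of the coincidence of $(C_{A_1}(t))_{t\in\R}$ and $(\cic(t))_{t\in\R}$ on $\cod$ (established in the paragraph just before) together with the moment-preservation of $\cic$ from Theorem~\ref{tmain}. Your argument via the decomposition \eqref{rozklad} with $\fev=0$, $\fod=f$ is precisely this.
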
 

In this context, formula \eqref{rozklad} may be expressed by saying that the cosine family $\cic $ is a direct sum of two moments-preserving cosine families acting in the subspaces of even and odd functions, respectively. Our final result in this section, presented below, says that the summands in \eqref{rozklad} are uniquely determined (when restricted to the related subspaces); we state the result in the more general context of  semigroups, since it implies the corresponding statement for cosine families by the Weierstrass formula.  
\newcommand{\aev}{A_{\text{\rm even}}}
\newcommand{\aod}{A_{\text{\rm odd}}}
\begin{prop}\label{unique} \  
\begin{enumerate}[{\rm (a)}]
\item Let $\aev$ be the part of the Laplacian in $\cev$: i.e. let it be the restriction of the Laplacian to the domain composed of all even, twice continuously differentiable  functions. From among all semigroups generated by restrictions of $\aev$ in $\cev$ there is only one that preserves $F_0$, the moment of order zero. This is the semigroup generated by $A_0$, defined in Proposition \ref{p2}. This semigroup preserves the moment of first order as well. 
\item Let $\aod$ be the part of the Laplacian in $\cod$: i.e. let it be the restriction of the Laplacian to the domain composed of all odd, twice continuously differentiable  functions. From among all semigroups generated by restrictions of $\aod$ in $\cod $ there is only one that preserves $F_1$, the moment of order one. This is the semigroup generated by $A_1$, defined in Proposition \ref{p2}. Clearly, this semigroup preserves the moment of order zero as well. 
\end{enumerate}
 \end{prop}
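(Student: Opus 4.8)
The plan is to mirror the argument of Theorem~\ref{lateron}: in each part I would first use the moment-preservation hypothesis to force the domain of the candidate generator into the domain of the operator named in the statement, and then invoke the standard fact that one semigroup generator cannot properly extend another to upgrade this inclusion to equality. The one genuinely new ingredient, compared with Theorem~\ref{lateron}, is that here only a \emph{single} moment is assumed preserved; the missing second relation between boundary values is supplied instead by the evenness (resp.\ oddness) of the functions in the domain. Existence of a moments-preserving example is already known (Corollaries~\ref{wnosek1} and~\ref{wnosek2}), so the content is the uniqueness together with the claim that the other moment is preserved as well.

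For part~(a), let $\sem{\ap}$ be a semigroup generated by a restriction of $\aev$ that preserves $F_0$, and fix $f\in D(\ap)$. Exactly as in the proof of Theorem~\ref{lateron}, differentiating $t\mapsto F_0(\e^{t\ap}f)$ and evaluating at $t=0$ gives $F_0(f'')=0$, which by the Remark following Theorem~\ref{lateron} means $f'(0)=f'(1)$. Since $f$ is even, differentiating $f(1-x)=f(x)$ at $x=0$ yields $f'(1)=-f'(0)$; together with $f'(0)=f'(1)$ this forces $f'(0)=f'(1)=0$, so $D(\ap)\subseteq D(A_0)$. As $A_0$ is a generator and $\ap$ is a restriction of it acting in the same way $f\mapsto f''$, for large $\lambda$ the operator $\lambda-A_0$ is injective while $\lambda-\ap$ is onto, whence $\ap=A_0$ (cf.\ \cite{kniga}, p.~267). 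Finally, that $A_0$ preserves $F_1$ follows by one integration by parts: for even $f$ with $f'(0)=f'(1)=0$ one has $f(0)=f(1)$, so $F_1(f'')=f'(1)-\bigl(f(1)-f(0)\bigr)=0$.

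For part~(b) I would argue symmetrically. If $\sem{\ap}$ is generated by a restriction of $\aod$ and preserves $F_1$, then for $f\in D(\ap)$ the same differentiation gives $F_1(f'')=0$, i.e.\ $f'(1)=f(1)-f(0)$. Oddness yields $f(1)=-f(0)$ and $f'(1)=f'(0)$, so that $f'(0)=f'(1)=-2f(0)$, which is precisely the boundary condition defining $A_1$; the generator-uniqueness argument then gives $\ap=A_1$, and preservation of $F_0$ is immediate because for odd $f$ the relation $f'(1)=f'(0)$ makes $F_0(f'')=f'(1)-f'(0)=0$. The only point demanding care—and the sole mild obstacle—is verifying that the differentiation step of Theorem~\ref{lateron} transfers verbatim to the invariant subspaces: one must observe that $F_0$ and $F_1$ stay continuous when restricted to $\cev$ and $\cod$, and that $\e^{t\ap}f$ remains in $D(\ap)$ so that $F_i\bigl((\e^{t\ap}f)''\bigr)=0$ along the whole trajectory. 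Granting this, everything else is routine.
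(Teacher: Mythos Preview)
Your proposal is correct and follows essentially the same route as the paper: differentiate the preserved moment along the trajectory to obtain $F_i(f'')=0$, combine this with the parity relation ($f'(0)=-f'(1)$ for even $f$, $f(1)=-f(0)$ and $f'(0)=f'(1)$ for odd $f$) to pin down the boundary condition, and conclude by the ``a generator cannot be properly extended by another generator'' argument. The only noteworthy difference is in the automatic-preservation claims: where you verify $F_1(f'')=0$ (resp.\ $F_0(f'')=0$) by integration by parts, the paper gives the slicker one-line observations $2F_1 f = F_0 f$ for $f\in\cev$ and $F_0 f = 0$ for $f\in\cod$, which yield preservation immediately on the whole subspace without passing through the domain.
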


\begin{proof} (a) Let $f$ be a member of the domain of the generator, say $A_p$, of a semigroup that preserves the moment of order zero. Since $f$ is even, $f'(0)=-f'(1),$ and since the semigroup preserves the moment of order zero, $f$ satisfies the first equation in \eqref{unki}. It follows that $f'(0)=f'(1)=0$, i.e. that $f\in D(A_0).$ Since $A_0$ cannot be a proper extension of another generator, we see that $A_0=A_p$. 

(b) The proof is analogous to (a) and we omit it; note that, in fact, any cosine family in $\cod $ preserves the moment of order zero, for $F_0f =0$ for all $f \in \cod.$ \end{proof}

The statement on automatic preservation of the first moment in point (a) may seem surprising at a first glance. It becomes clear, however, once we note that 
\begin{equation} \label{clear} 2F_1 f = F_0 f, \qquad f \in \cev , \end{equation}
the latter relation being a direct consequence of 
\[
F_1 f = \int_0^1 x f(x) \ud x = \int_0^1 (1-x) f(x) \ud x\qquad \hbox{ for } f \in \cev.
\] 

\section{Asymptotic behavior of the related semigroup}\label{s3}

Let $\sem{A}$ be the moments-preserving semigroup generated by the operator $A$ defined in \eqref{opa}; in this section we provide information on asymptotic behavior of this semigroup.  

Let  $\sem{A_0}$ and $\sem{A_1}$ denote the semigroups generated by the operators $A_0$ and $A_1$, respectively, described in Proposition \ref{p2}.  The Weierstrass formula shows that  the spaces $\cod $ and $\cev$ are invariant under $\sem{A}$, and by \eqref{rozklad}, we have
\begin{equation} \label{rozklads} \e^{At} f = \e^{A_0t}\fev + \e^{A_1t}\fod , \qquad f \in C[0,1],\; t \ge 0. \end{equation}
It is well-known that (see e.g., the general homogenization theorem of Conway, Hoff and Smoller \cite{chs} or \cite[Thm 14.17]{smoller}; the particular case considered here may also be deduced 
from the explicit semigroup expression in \cite[p. 68 eq. (2.8)]{EngNag00})   
\[
\lim_{t\to \infty} \e^{tA_0} f = (F_0 f)f_0 
,\qquad f\in C[0,1],
\]
where 
\begin{equation}
\label{def:f0}
f_0:=1_{[0,1]}.
\end{equation}
 In fact, there is a positive constant $\eps >0$ such that 
 \[
 \| \e^{tA_0} - P_0 \|_{\mathcal L(C[0,1])} \le \e^{-\eps t},
 \]
  where 
  \[
  P_0f: =  \left(\int_0^1 f(x) \, \mathrm {d} x\right)f_0.
  \]
By \eqref{rozklads}, this implies existence of the limit of the even part of the semigroup $\sem{A}$: 
 \begin{equation}\label{even} \lim_{t\to \infty} \e^{tA_0} \fev = \left(\int_0^1 \fev(x) \, \mathrm {d} x\right )f_0 = \left(\int_0^1 f(x) \, \mathrm {d} x\right )f_0.\end{equation}

Hence, it remains to determine the limit of the odd part {in order to determine the long-time behavior of the whole system.} 
We begin by noting that $\cod $ is isometrically isomorphic to $\cez $, the space of continuous functions on $[0,1]$ vanishing at $x=0.$ The isomorphism  is given by 
 \begin{eqnarray*}
I: \cod &\to &\cez,\\
 If(x) &:=& f\left(\frac {1-x}2\right),
  \end{eqnarray*}
with inverse given by
\[
I^{-1}f(x) =
\left\{
\begin{array}{ll}
f(1-2x),\qquad &x\in [0,\frac 12),\\
-f(2x-1),&x \in [\frac 12,1].
\end{array} 
\right.
\]
The isomorphic image of $A_1$ in $\cez $ is given by $B_1 f= I A_1 I^{-1}f$ with domain equal to the image of the domain of $D(A_1),$ i.e. 
\begin{eqnarray*}
D(B_1)&=&\{f\in C^2[0,1]:f(0)=f''(0)=0\hbox{ and }f'(1)=f(1)\},\\
B_1f&=&4f''.
\end{eqnarray*}

The {strongly-continuous} semigroup generated by $B_1$, and the related cosine family, were already considered in \cite{kelvin}, even in the context of Lord Kelvin's method of images.  This semigroup describes chaotic movement of particles in the interval $[0,1]$ with constant inflow of particles from the boundary at $x=1$ and outflow at the boundary $x=0$ (see \cite{kelvin,kazlip}). As it turns out, the rates of inflow and outflow are so tuned here that in the limit a non-trivial equilibrium is attained. Now, the null space of $B_1$ is the linear span of $h(x)=x$ and this suggests that the odd part converges to a scalar multiple of $I^{-1}h (x)= 1-2x$. 
Since the odd part preserves $F_1$, 
and $F_1 I^{-1} h= - \frac 16, $ it will be convenient to work with $f_1$ defined by 
\begin{equation}
\label{def:f1}
f_1 (x) = 12x - 6,\qquad x\in [0,1],
\end{equation}
{which is normalized so that $F_1 f_1=1$.}
To recapitulate, our aim is to show 
\begin{equation} \label{glim} \lim_{t\to \infty}  \e^{tA} f = Pf \end{equation} where 
\begin{equation} \label{pe} Pf = (F_0\fev )f_0 + (F_1\fod )f_1,\end{equation}  
i.e., that in the limit the moments-preserving semigroup forgets the shape of the initial value and remembers merely its first two moments about $0$: note that
$$ F_0 Pf = F_0 \fev = F_0 f \quad \text{ and } \quad F_1 Pf = \frac 12 F_0\fev + F_1 \fod = F_1 f, $$
where the last equality follows by \eqref{clear}.

In the main result of this section, Theorem~\ref{equilibr-nonneum} (later on),
we are actually going to prove a stronger result than (\ref{glim}), and our proof will be based on methods different from those that have been used so far. We will namely lift our $A$ to a suitable  operator in $L^2(0,1)$, prove some spectral results there by Hilbert space methods, and finally return to the original operator $A$, showing norm convergence towards a sum of two projections for the semigroup generated by it. 

\begin{lem}\label{lem_generA}
Consider the operator $\tilde{A}$ defined by
	\begin{eqnarray*}
D(\tilde{A})&:=&\left\{f\in H^2(0,1):
f'(0)=f'(1)=f(1)-f(0)\right\},\\
\tilde{A} f&:=&f''.
\end{eqnarray*}
Then: 
\begin{enumerate}[(1)]
\item $\tilde{A}$ generates a compact, analytic, self-adjoint semigroup on $L^2(0,1)$.
\item {The spectrum of $\tilde{A}$ consists of countably many negative eigenvalues accumulating at $-\infty$.}  The largest eigenvalue of $\tilde{A}$ is $0$, which has multiplicity 2.
\end{enumerate}
\end{lem}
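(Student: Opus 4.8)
The plan is to prove the two parts of Lemma \ref{lem_generA} by realizing $\tilde{A}$ as the self-adjoint operator associated to a suitable symmetric, bounded, elliptic sesquilinear form on a closed subspace of $H^1(0,1)$, and then to extract compactness, analyticity, and the spectral structure from standard form-theoretic machinery together with an explicit computation of the kernel. First I would observe that the boundary conditions $f'(0)=f'(1)=f(1)-f(0)$ defining $D(\tilde{A})$ are of the type that arise by taking variations of the Dirichlet form $a(u,v):=\int_0^1 u'\overline{v'}\ud x$ restricted to the form domain $V:=H^1(0,1)$, but with a boundary term encoding the coupling between the two endpoints. Concretely, I would look for the form whose associated operator has exactly these boundary conditions: integrating $\int_0^1 f''\overline{v}\ud x$ by parts produces boundary terms $f'(1)\overline{v(1)}-f'(0)\overline{v(0)}$, and imposing $f'(0)=f'(1)=f(1)-f(0)$ suggests the form
\begin{equation*}
a(u,v)=\int_0^1 u'\overline{v'}\ud x+\bigl(u(1)-u(0)\bigr)\overline{\bigl(v(1)-v(0)\bigr)},\qquad u,v\in V.
\end{equation*}
I would verify by integration by parts that the self-adjoint operator associated to $(a,V)$ in $L^2(0,1)$ is precisely $\tilde{A}$: a function $f$ lies in the domain iff $f\in H^2(0,1)$ and $a(f,v)=\int_0^1(-f'')\overline{v}\ud x$ for all $v\in V$, and comparing boundary terms pins down the conditions in $D(\tilde{A})$.

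Once $\tilde{A}$ is identified as the operator associated to a symmetric, densely defined, $H^1$-bounded and (by the Poincaré inequality, using the boundary term to control the constant mode) coercive-up-to-a-shift form, the abstract theory (e.g. \cite{abhn} or the standard Lions–Kato framework) immediately delivers part (1): the operator is self-adjoint and bounded above, so $-\tilde{A}$ is self-adjoint and bounded below, and hence generates an analytic semigroup of angle $\pi/2$; self-adjointness of the semigroup is inherited from self-adjointness of the generator. Compactness follows because $V=H^1(0,1)$ embeds compactly into $L^2(0,1)$ by Rellich–Kondrachov, and a form with compactly embedded form domain always yields a semigroup that is compact for $t>0$ (equivalently, the resolvent is compact). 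This part I expect to be routine.

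For part (2), compactness of the resolvent forces the spectrum to be a discrete set of eigenvalues of finite multiplicity; self-adjointness makes them real; and the inequality $a(u,u)\ge 0$ together with $\langle\tilde{A}u,u\rangle=-a(u,u)$ shows every eigenvalue is $\le 0$, with the only accumulation point at $-\infty$ since the eigenvalues tend to $-\infty$ (the form is bounded below and unbounded above as a quadratic form). The substantive computation, and the step I expect to be the main obstacle, is the determination of the kernel of $\tilde{A}$ and the claim that $0$ has multiplicity exactly $2$. I would solve $f''=0$, so $f(x)=\alpha x+\beta$, and impose the boundary conditions: $f'(0)=f'(1)=\alpha$ automatically, while $f(1)-f(0)=\alpha$ gives $\alpha=\alpha$, an identity. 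Thus the conditions are satisfied for all $\alpha,\beta$, so $\ker\tilde{A}$ is two-dimensional, spanned by $1$ and $x$ (which correspond, after normalization, to $f_0$ and $f_1$ from \eqref{def:f0} and \eqref{def:f1}). The remaining point is to confirm that $0$ is genuinely the largest eigenvalue, i.e. there are no positive eigenvalues; this is immediate from $\langle\tilde{A}u,u\rangle=-a(u,u)\le 0$, so no eigenvalue exceeds $0$, and the two-dimensional kernel realizes the value $0$. The delicate bookkeeping is ensuring the form's boundary term is correctly matched to the stated boundary conditions so that the kernel is two- rather than one-dimensional; this is where I would spend the most care.
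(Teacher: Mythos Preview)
Your overall strategy---realize $\tilde A$ via a symmetric sesquilinear form on $V=H^1(0,1)$, then read off self-adjointness, analyticity, compactness, and the spectral picture---is exactly the paper's approach. But there is a sign error in your boundary term that matters. Integration by parts gives, for $f\in H^2(0,1)$ and $v\in V$,
\[
\int_0^1 f'\overline{v'}\ud x = -\int_0^1 f''\overline{v}\ud x + f'(1)\overline{v(1)} - f'(0)\overline{v(0)}.
\]
With your form $a(f,v)=\int_0^1 f'\overline{v'}\ud x + (f(1)-f(0))\overline{(v(1)-v(0))}$, the natural boundary conditions obtained by setting the surface terms to zero are $f'(1)=-(f(1)-f(0))=f'(0)$, which is the \emph{wrong} sign. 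The correct form is
\[
a(u,v)=\int_0^1 u'\overline{v'}\ud x \;-\; (u(1)-u(0))\,\overline{(v(1)-v(0))},
\]
equivalently the paper's $a(u,v)=(u'|v')_{L^2}+({\mathcal A}\underline{u}\,|\,\underline{v})_{\mathbb C^2}$ with ${\mathcal A}=\begin{pmatrix}-1&1\\1&-1\end{pmatrix}$, since $({\mathcal A}\underline{u}\,|\,\underline{u})=-|u(1)-u(0)|^2$. This is exactly the ``delicate bookkeeping'' you flagged; as written, your form yields a different operator whose kernel is only the constants.

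Once the sign is corrected, a second point needs attention: your argument for part (2) relies on $a(u,u)\ge 0$, which is no longer obvious, because the boundary term now subtracts. You recover nonnegativity via the one-line estimate
\[
|u(1)-u(0)|^2 = \left|\int_0^1 u'(x)\ud x\right|^2 \le \|u'\|_{L^2}^2,
\]
with equality iff $u'$ is constant, i.e.\ $u$ is affine. Hence $a(u,u)\ge 0$ with two-dimensional null space $\operatorname{span}\{1,x\}$, matching your kernel computation, and $\langle \tilde A u,u\rangle=-a(u,u)\le 0$ rules out positive eigenvalues. With this fix your route is actually \emph{simpler} than the paper's for part (2): the paper establishes ellipticity via Gagliardo--Nirenberg and then invokes a result of Behrndt--Luger to count positive eigenvalues, whereas the Cauchy--Schwarz inequality above gives both nonnegativity of the form and the kernel in one stroke.
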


\begin{proof}
{Observe that our boundary conditions can be written as
\begin{equation}
\label{bc-behlug}
{\mathcal A}\underline{f}+
{\mathcal B}\underline{f'}=0,	
\end{equation}
where
\[
{\mathcal A}:=\begin{pmatrix}
-1 & 1\\ 1 & -1
\end{pmatrix},\qquad 
{\mathcal B}:=\begin{pmatrix}
1 & 0 \\ 0 & 1
\end{pmatrix}
\]
and
\[
\underline{f}:=\begin{pmatrix}
f(0)\\ f(1)
\end{pmatrix}
\qquad \hbox{ and }\qquad \underline{f}':=\begin{pmatrix}
-f'(0)\\ f'(1)
\end{pmatrix}.
\]

(1) A direct computation (e.g.\ on the lines of~\cite[Thm.~3.4.8]{Are00}) shows that $\tilde{A}$ is the operator associated with the densely defined sesquilinear form
$$a(f,g):=
(f'|g')_{L^2} +({\mathcal A}\underline{f}|\underline{g})_{\mathbb C^2},\qquad f,g\in V:=H^1(0,1).$$
(Recall that by definition the operator associated with the densely defined form $a$ is given by
\begin{eqnarray*}
D(T)&:=&\{f\in V:\exists g\in L^2(0,1) \hbox{ s.t. }a(f,h)=(g,h)\hbox{ for all }h\in V\},\\
Tf&:=&-g;
\end{eqnarray*}
hence what we claim is that $\tilde A=T$.)}

{ If $f \in V$, then by the interpolation inequality of Gagliardo--Nirenberg (\cite[Comment 8.1.(iii)]{Bre10}) there holds for some constant $C>0$ and   all $\epsilon \in (0,1)$
\begin{equation}
\label{imbed}
\|f\|^2_{C[0,1]} \le C \|f'\|_{L^2} \|f\|_{L^2}\le \frac{C\epsilon}{2} \|f'\|_{L^2}^2 +\frac{C}{2\epsilon}\|f\|_{L^2}^2,\qquad f\in V.
\end{equation}
thanks to the inequality of Schwarz and Young. }
Now, the hermitian matrix $\mathcal A$ has eigenvalues $0$ and $-2$, hence 
\[
({\mathcal A}\underline{f}|\underline{f})_{\mathbb C^2}\ge -2 \left(\|f(0)\|^2+\|f(1)\|^2\right)\ge -4\|u\|^2_{C[0,1]}.
\]
This implies that for all $f\in V$
{
\begin{align*} a(f,f)  & \ge \|f'\|_{L^2}^2  -4\|f\|^2_{C[0,1]}\\ & \ge (1-2C\epsilon) \|f'\|_{L^2}^2- \frac{2C}{\epsilon} \| f\|_{L^2}^2.\end{align*}
}
Taking $\epsilon\in (0,\frac{1}{2C})$ we finally conclude that $a$ is elliptic in the sense of~\cite[\S~7.2]{Are06}.

Because $a$ is clearly bounded and symmetric, too, we conclude by~\cite[Thm.~7.1.5 and \S~7.2]{Are06} that $A$ generates an analytic, self-adjoint, quasi-con\-trac\-tive semigroup on $L^2(0,1)$. Since by the Rellich--Khondrachov Theorem (see e.g. \cite[Thm.~8.8]{Bre10}), $V$ embeds compactly in $L^2(0,1)$, this semigroup is compact by~\cite[Prop.~8.1.8]{Are06}. 

(2) A direct computation shows that $0$ is an eigenvalue of $\tilde A$ and the null space of $\tilde A$ is a 2-dimen\-sional space (which has $(f_0,f_1)$ as an orthogonal basis, where $f_0,f_1$ are defined in~\eqref{def:f0} and~\eqref{def:f1}).
Using the result recently obtained in~\cite{BehLug10} we will show that $\tilde A$ has no strictly positive eigenvalues. A special case of~\cite[Thm.~1]{BehLug10} states that the number of strictly positive eigenvalues of the second derivative with boundary conditions of the form \eqref{bc-behlug} agrees with the number of strictly positive eigenvalues of the matrix ${\mathcal A}{\mathcal B}^* +{\mathcal B}{\mathcal M}_0 {\mathcal B}^*$, where $\mathcal A$ and $\mathcal B$ are the matrices that appear in \eqref{bc-behlug} and ${\mathcal M}_0$ is defined in accordance with~\cite[eq.~(5)]{BehLug10}. Since in our case the latter matrix happens to be equal to $\mathcal A,$ 
$
{\mathcal A}{\mathcal B^*} +{\mathcal B}{\mathcal M_0}{\mathcal  B^*} =2 {\mathcal A}$
and this matrix has eigenvalues $0$ and $-2$.  It follows that $\tilde{A}$ has no strictly positive eigenvalues. In particular, $0$ is the spectral bound of $\tilde{A}$. The remaining assertion follows from the general spectral theory of self-adjoint operators with compact resolvent.
\end{proof}

\begin{lem}\label{lemmengelA}
The operator $A$ on $C[0,1]$ is the generator of a compact, analytic semigroup  $(\e^{tA})_{t\ge 0}$ that is the restriction of $(\e^{t\tilde{A}})_{t\ge 0}$ to $C[0,1]$. The spectral bound of $A$ is 0.
\end{lem}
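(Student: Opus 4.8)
The plan is to piggy‑back on the two facts already available: that $A$ generates a strongly continuous cosine family on $C[0,1]$ (Theorem~\ref{tmain}), and that $\tilde A$ generates a compact analytic semigroup on $L^2(0,1)$ with spectral bound $0$ (Lemma~\ref{lem_generA}). The heart of the matter is the \emph{consistency} of the two semigroups, i.e.\ the claim that $(\e^{t\tilde A})_{t\ge 0}$ leaves $C[0,1]$ invariant and restricts there to $(\e^{tA})_{t\ge 0}$. Once this identification is in place, analyticity, compactness and the value of the spectral bound follow with little extra effort.

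First I would establish consistency by comparing resolvents. Fix $\lambda>0$ so large that it lies in the resolvent sets of both $A$ and $\tilde A$; such $\lambda$ exist since both operators generate semigroups. Given $g\in C[0,1]\subset L^2(0,1)$, set $f:=(\lambda-\tilde A)^{-1}g\in D(\tilde A)\subset H^2(0,1)$. By the Sobolev embedding $H^2(0,1)\hookrightarrow C^1[0,1]$ we have $f\in C^1[0,1]$, and since then $f''=\lambda f-g$ is continuous, in fact $f\in C^2[0,1]$. The boundary conditions defining $D(\tilde A)$ and $D(A)$ are literally the same, so $f\in D(A)$ and $(\lambda-A)f=g$; hence $(\lambda-\tilde A)^{-1}g=(\lambda-A)^{-1}g$ for every $g\in C[0,1]$. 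Passing from resolvent consistency to semigroup consistency is then routine: by the exponential (Post--Widder) formula $\e^{tA}g=\lim_n(\tfrac nt(\tfrac nt-A)^{-1})^n g$ in $C[0,1]$ and $\e^{t\tilde A}g=\lim_n(\tfrac nt(\tfrac nt-\tilde A)^{-1})^n g$ in $L^2(0,1)$, and because the approximants coincide on $C[0,1]$ while the continuous embedding $C[0,1]\hookrightarrow L^2(0,1)$ turns the $C$-limit into an $L^2$-limit, the two limits agree. Thus $\e^{t\tilde A}g=\e^{tA}g\in C[0,1]$ for all $g\in C[0,1]$.

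Analyticity of $(\e^{tA})_{t\ge 0}$ I would not prove afresh on $C[0,1]$ but read off from Theorem~\ref{tmain}: the generator of a strongly continuous cosine family generates a bounded analytic semigroup of angle $\tfrac\pi2$ (see \cite[Thm.~3.14.17]{abhn} or \cite[Thm.~8.7]{goldstein}, the very results underlying the Weierstrass formula~\eqref{eq:1}). For compactness, observe that $(\lambda-A)^{-1}$ maps $C[0,1]$ into $D(A)\subset C^2[0,1]$, and $C^2[0,1]\hookrightarrow C[0,1]$ compactly by Arzel\`a--Ascoli, so $A$ has compact resolvent; for an analytic semigroup this is equivalent to compactness of $\e^{tA}$ for every $t>0$ (cf.~\cite{EngNag00}). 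Finally, the spectral bound: since $A$ has compact resolvent, $\sigma(A)$ consists entirely of eigenvalues, and so does $\sigma(\tilde A)$ by Lemma~\ref{lem_generA}. The two point spectra coincide: any eigenfunction of $\tilde A$ lies in $H^2(0,1)\subset C^1[0,1]$ and, from $f''=\mu f$, actually in $C^2[0,1]$ with the same boundary conditions, hence is an eigenfunction of $A$; conversely $D(A)\subset D(\tilde A)$ gives the reverse inclusion. Therefore $\sigma(A)=\sigma(\tilde A)$, and by Lemma~\ref{lem_generA}(2) the spectral bound of $A$ equals that of $\tilde A$, namely $0$.

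None of these steps is deep once Lemma~\ref{lem_generA} and Theorem~\ref{tmain} are in hand, but the linchpin — and the only place where care is genuinely needed — is the consistency argument of the second paragraph, which hinges on the regularity bootstrap (a weak $H^2$-solution with continuous right-hand side is a genuine $C^2$-solution) together with the verbatim agreement of the two sets of boundary conditions. Everything else is bookkeeping built on that identification.
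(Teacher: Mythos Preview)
Your argument is correct, but it follows a different path from the paper's. The paper does not invoke Theorem~\ref{tmain} at all: instead it proves directly that the analytic $L^2$-semigroup leaves $C[0,1]$ invariant via the chain $\e^{t\tilde A}L^2(0,1)\subset D(\tilde A^{\,2})\hookrightarrow D(A)\hookrightarrow C[0,1]$ (using $D(\tilde A^{\,2})\subset H^4(0,1)\subset C^2[0,1]$), establishes strong continuity of the restricted family on $C[0,1]$ by the Gagliardo--Nirenberg interpolation inequality~\eqref{imbed}, and then appeals to the abstract restriction machinery in~\cite[\S~II.2.2]{EngNag00} to identify the generator of the restricted semigroup as the part of $\tilde A$ in $C[0,1]$, namely $A$. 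For the spectral bound the paper quotes~\cite[Prop.~IV.2.17]{EngNag00}, which gives $\sigma(A)=\sigma(\tilde A)$ directly from the tower of embeddings $V\hookrightarrow C[0,1]\hookrightarrow L^2(0,1)$.

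Your route --- resolvent consistency via an elementary regularity bootstrap, semigroup consistency via the Euler approximation, analyticity read off from cosine-family generation, and the spectral equality by explicitly matching eigenfunctions --- is more hands-on and avoids the abstract restriction theorems, at the price of leaning on Theorem~\ref{tmain}. Both arguments are of comparable length; the paper's has the mild structural advantage of being independent of the cosine-family construction, so that the asymptotic results of Section~\ref{s3} could in principle stand on their own. (A small terminological quibble: the approximation $\e^{tA}g=\lim_n\bigl(\tfrac nt(\tfrac nt-A)^{-1}\bigr)^n g$ is usually called the Euler or Hille formula rather than Post--Widder, but the content is of course correct.)
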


\begin{proof}
 A direct computation shows that the operator $A$ is the part of $\tilde{A}$ in $C[0,1]$. Because the semigroup $(\e^{t\tilde{A}})_{t\ge 0}$ is analytic, 
\begin{equation}\label{invar1}
\e^{tA}C[0,1]\hookrightarrow \e^{t\tilde{A}}L^2(0,1)\subset D(\tilde{A}^{\; 2}),\qquad t>0,
\end{equation}
and hence the semigroup leaves $C[0,1]$ invariant,  as 
\begin{equation}\label{invar2}
D(\tilde{A}^{\; 2})\hookrightarrow D(A)\hookrightarrow C[0,1].
\end{equation}
Finally, the part of $\tilde{A}$ in $V$ generates a strongly continuous semigroup, hence strong continuity of the semigroup generated by $A$ on $C[0,1]$ can be proved exploiting~\eqref{imbed}.
 Then, it follows from~\cite[\S~II.2.2]{EngNag00} that the semigroup obtained by restricting $(\e^{t\tilde{A}})_{t\ge 0}$ to $C[0,1]$ is generated by the part of $\tilde{A}$ in $C[0,1]$, viz $A$. Compactness of $(\e^{tA})_{t\ge 0}$ follows from the compact embedding of $D(A)$ into $C[0,1]$, a direct consequence of the theorem of Ascoli--Arzel\`a. 
 
 To conclude the proof, observe that
\[
V\hookrightarrow C[0,1]\hookrightarrow L^2(0,1).
\]
Accordingly, by~\cite[Prop.~IV.2.17]{EngNag00} the spectra of $A$ and $\tilde{A}$ coincide, and in particular we obtain $s(A)=0$ since $s(\tilde{A})=0$ by Lemma~\ref{lem_generA}.(2).
\end{proof}

We have already observed that $f_0$ and $f_1,$ defined in~\eqref{def:f0} and~\eqref{def:f1}, treated as members of $L^2(0,1)$, form an orthogonal basis of the null space of $\tilde A$
 with $\|f_0\|_{L^2} = 1$ and $\|f_1\|_{L^2}^2 = 12.$ Also, 
 \[
 F_1\fod = \int_0^1 \left(x - \frac 12\right) f(x) \ud x = \frac 1{12} (f|f_1)_{L^2},
 \] so that, for $f \in L^2(0,1)$, the operator $P$ defined in \eqref{pe} coincides with the projection on the null space of $\tilde{A}$.


In view of the above results, we are finally in the position to prove that the semigroup $(\e^{tA})_{t\ge 0}$ converges, exponentially and in norm, towards a rank-2 projection. In particular, it is bounded.

\begin{thm}\label{equilibr-nonneum}
The semigroup $(\e^{tA})_{t\ge 0}$ generated by $A$ converges towards a rank-2 projection. More precisely, for all $\epsilon>0$ such that $-\epsilon$ is strictly larger than the second largest eigenvalue of $A$  there is $M=M(\epsilon) $ such that
\begin{equation}\label{gw}
\| \e^{tA}-P\|_{\mathcal L(C[0,1])} \le M \e^{-\epsilon t},\qquad t\ge 0,
\end{equation}
where $P$ is defined in \eqref{pe}.

\end{thm}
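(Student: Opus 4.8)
The plan is to deduce everything from the Hilbert-space spectral picture established in Lemma~\ref{lem_generA} and the interpolation structure of Lemma~\ref{lemmengelA}, then transport the resulting convergence estimate from $L^2(0,1)$ back to $C[0,1]$. First I would record the spectral decomposition of the self-adjoint operator $\tilde A$ on $L^2(0,1)$: by Lemma~\ref{lem_generA} its spectrum is a sequence of eigenvalues $0=\lambda_0=\lambda_1>\lambda_2\ge\lambda_3\ge\cdots\to-\infty$, with $0$ of multiplicity $2$ and eigenspace spanned by $f_0$ and $f_1$. Since the semigroup is self-adjoint and compact, the spectral theorem gives at once
\[
\|\e^{t\tilde A}-P\|_{\mathcal L(L^2)}\le \e^{\lambda_2 t},\qquad t\ge 0,
\]
where $P$ is the orthogonal projection onto $\ker\tilde A=\mathrm{span}\{f_0,f_1\}$. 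As noted in the paragraph before the theorem, this orthogonal projection agrees on $L^2(0,1)$ with the operator $P$ defined in \eqref{pe}, so the Hilbert-space statement already reads $\|\e^{t\tilde A}-P\|\le \e^{\lambda_2 t}$ with exactly the projection we want.

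The substance of the proof is the passage from $L^2$-convergence to $C[0,1]$-convergence in operator norm. The device is the smoothing of the analytic semigroup combined with the chain of continuous embeddings $V\hookrightarrow C[0,1]\hookrightarrow L^2(0,1)$ together with $D(\tilde A^{\,2})\hookrightarrow D(A)\hookrightarrow C[0,1]$ already used in Lemma~\ref{lemmengelA}. Concretely, I would fix $\epsilon>0$ with $-\epsilon>\lambda_2$, split $\e^{tA}-P=\e^{(t-1)A}(\e^{A}-P)$ for $t\ge 1$ using $P\e^{sA}=\e^{sA}P=P$, and estimate: the factor $\e^{A}-P$ maps $C[0,1]$ boundedly into $D(\tilde A^{\,2})\subset C[0,1]$ by analyticity and \eqref{invar1}--\eqref{invar2}, while on $D(\tilde A^{\,2})$, which embeds into $L^2$, the factor $\e^{(t-1)A}$ coincides with $\e^{(t-1)\tilde A}$ and decays like $\e^{\lambda_2(t-1)}$ in $L^2$. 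To get back to the $C[0,1]$-norm I would use that $\e^{sA}$ also smooths $L^2$ into $D(\tilde A^{\,2})\hookrightarrow C[0,1]$, so an additional application of the semigroup converts the $L^2$-decay into a $C[0,1]$-estimate; choosing $M=M(\epsilon)$ to absorb the bounded-time behaviour on $[0,1]$ (where the strongly continuous semigroup is uniformly bounded) and the fixed operator norms of the smoothing factors then yields \eqref{gw}.

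An alternative, perhaps cleaner, route I would keep in reserve is to invoke the general principle that the exponential decay rate of an eventually-norm-continuous (here analytic and compact) semigroup equals its spectral bound on the complementary spectral subspace, via the spectral mapping theorem for analytic semigroups (as in \cite[Cor.~IV.3.12]{EngNag00}). Since the spectra of $A$ and $\tilde A$ coincide by Lemma~\ref{lemmengelA}, the second largest eigenvalue $\lambda_2$ is the spectral bound of $A$ restricted to the range of $\Id-P$, and the spectral projection $P$ for the isolated eigenvalue $0$ of the compact-resolvent operator $A$ is exactly the rank-$2$ projection onto $\mathrm{span}\{f_0,f_1\}$; one must check that this spectral projection of $A$ agrees with the $P$ of \eqref{pe}, which follows because $f_0,f_1\in\ker A=\ker\tilde A$ and the functionals $F_0(\cdot)_{\mathrm{even}}$, $F_1(\cdot)_{\mathrm{odd}}$ annihilate the complementary invariant subspace. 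The main obstacle in either approach is the norm-convergence upgrade from $L^2$ to $C[0,1]$: strong convergence transfers trivially, but to obtain the uniform estimate \eqref{gw} one genuinely needs the analytic smoothing to move the decay through the embedding $D(\tilde A^{\,2})\hookrightarrow C[0,1]$ without losing the exponential rate, and care is required to verify that the projection $P$ computed spectrally is literally the operator in \eqref{pe} rather than merely agreeing with it on $L^2$.
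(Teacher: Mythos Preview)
Your proposal is correct, and in fact your ``alternative route'' is exactly what the paper does: the paper observes that, by self-adjointness of $\tilde A$, every eigenvalue is a simple pole of the resolvent; since the resolvent of $A$ is the restriction to $C[0,1]$ of that of $\tilde A$, the same holds for $A$, and the residue at $0$ is the rank-$2$ projection onto $\ker A$, which has already been identified with the $P$ of \eqref{pe}. The paper then invokes \cite[Cor.~V.3.3]{EngNag00} for eventually compact semigroups with a dominant eigenvalue that is a first-order pole of the resolvent, and \eqref{gw} follows immediately.

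Your primary route---establishing the decay first in $L^2$ via the spectral theorem and then transferring it to $C[0,1]$ by sandwiching with the smoothing factor $\e^{\tilde A}$ (using $D(\tilde A^{\,2})\hookrightarrow C[0,1]$)---is a genuinely different and more hands-on argument. It avoids the black-box corollary at the price of a short bookkeeping computation (the factorisation $\e^{tA}-P=\e^{\tilde A}\,\e^{(t-2)\tilde A}(\Id-P)\,(\e^A-P)$ for $t\ge 2$, with the middle factor controlled in $\mathcal L(L^2)$ and the outer factors bounded $C[0,1]\to L^2$ and $L^2\to C[0,1]$ respectively). Both approaches rely on the same two ingredients already supplied by Lemmas~\ref{lem_generA} and~\ref{lemmengelA}: the identification of the spectrum and kernel, and the analytic smoothing $L^2\to C[0,1]$. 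The paper's route is shorter; yours is more explicit about where the exponential rate comes from.
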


\begin{proof}
It follows from the spectral theorem for self-adjoint operators that each eigenvalue of $\tilde{A}$ is  a simple pole of the resolvent of $\tilde{A}$. Now, in view of the results in~\cite[\S~IV.2.b]{EngNag00} the spectra of $A$ and $\tilde{A}$ agree and the resolvent of $A$ is simply the restriction to $C[0,1]$ of the resolvent of $\tilde{A}$. Furthermore, since the resolvent $(\cdot-\tilde{A})^{-1}$ of $\tilde{A}$ has at each eigenvalue $\lambda$ a first order pole, i.e., the spectral projection associated with $\lambda$ has 1-dimensional range, see \cite[p.~246]{EngNag00}, so does the resolvent of $A$.  In other words,   each eigenvalue of $A$ is  a simple pole of the resolvent of $A$.
In particular, 0 is a simple pole of $A^{-1}$ whose residue is exactly the rank-2 projection on ${\rm ker}(A)$, i.e., the operator $P$. 
Now, a special case of the assertion of~\cite[Cor.~V.3.3]{EngNag00} states that because $0$ is a dominant eigenvalue (as in our case, since $0$ is the spectral bound and an eigenvalue of $A$) of the generator  of an eventually compact semigroup (ours is even immediately compact, 	by Lemma~\ref{lemmengelA}), and because 0 is also a first order pole of the resolvent of $A$, the semigroup converges towards the associated residue $P$ in the way described in equation \eqref{gw}. 
 \end{proof}

Let us observe that the semigroup generated by $\tilde{A}$ can be shown to be non-positive -- this can be done by the Beurling--Deny criterion. This explains why our entire proof relies solely upon properties of self-adjoint semigroups.

\bibliographystyle{plain}
\bibliography{bib}
\end{document}